\theoremstyle{plain}
    \newtheorem{thm}{Theorem}[section]
    \newtheorem{lem}[thm]{Lemma}
    \newtheorem{prop}[thm]{Proposition}
    \newtheorem{cor}[thm]{Corollary}
    \newtheorem{conje}[thm]{Conjecture}
    \newtheorem{claim}[thm]{Claim}
\theoremstyle{definition}    
    \newtheorem{defn}[thm]{Definition}
    \newtheorem{rem}[thm]{Remark}
\def\an{{\mathrm{an}}}
\def\Aut{{\mathrm{Aut}}}
\def\C{{\mathbb{C}}}
\def\CH{{\mathrm{CH}}}
\def\dec{{\mathrm{dec}}}
\def\dim{{\mathrm{dim}}}
\def\div{{\mathrm{div}}}
\def\id{{\mathrm{id}}}
\def\ind{{\mathrm{ind}}}
\def\P{{\mathbb{P}}}
\def\Q{{\mathbb{Q}}}
\def\Spec{{\mathrm{Spec}\,}}
\def\Z{{\mathbb{Z}}}
\begin{document}
\title{Notes on symplectic action on $(2,1)$-cycles on $K3$ surfaces}
\author{Ken Sato}
\address{Department of Mathematics, Institute of Science Tokyo}
\email{sato.k.da@m.titech.ac.jp}
\begin{abstract}
In this paper, we propose and study a conjecture that symplectic automorphisms of a $K3$ surface $X$ act trivially on the indecomposable part $\CH^2(X,1)_\ind\otimes \Q$ of Bloch's higher Chow group.
This is a higher Chow analogue of Huybrechts' conjecture on the symplectic action on $0$-cycles.
We give several partial results verifying our conjecture, some conditional and some unconditional.
Our unconditional results include the full proof for Kummer surfaces of product type.
\end{abstract}
\maketitle
\section{Introduction}
The higher Chow groups $\CH^p(X,q)$ of a smooth variety $X$, introduced by Bloch, are a generalization of the classical Chow groups $\CH^p(X)$.
They are related to many important invariants in algebraic geometry, $K$-theory, and number theory.
However, as with the classical Chow groups,  their structure remains mysterious when the codimension $p$ is greater than 1.

In this paper, we focus on the higher Chow group $\CH^2(X,1)$ of $K3$ surfaces.
For the classical Chow group $\CH^2(X)$ of $K3$ surfaces, there has been extensive work motivated by Bloch's conjecture, although its explicit structure remains out of reach.
In particular, regarding the actions of automorphisms, Huybrechts proposed the following conjecture \cite[Conjecture 3.4]{Huyconj}.
\begin{conje}\label{Huybrechts}
Let $X$ be a $K3$ surface and $\Aut_s(X)$ be the group of symplectic automorphisms of $X$.
Then $\Aut_s(X)$ acts trivially on $\CH^2(X)$.
\end{conje}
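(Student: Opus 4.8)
The plan is to reduce the conjecture to a statement about the transcendental part of the Chow group and then attack that part using the theory of finite-dimensional motives. First I would recall that for a $K3$ surface one has $\CH^2(X)=\CH_0(X)=\Z\oplus T(X)$, where the first summand records the degree and $T(X):=\CH_0(X)_{\mathrm{hom}}$ is the Albanese kernel (the Albanese variety of $X$ being trivial). Every automorphism fixes the degree, so the whole content of the conjecture is that $f_*=\id$ on $T(X)$ for each $f\in\Aut_s(X)$. Moreover, by the Beauville--Voisin theorem the image of the intersection product $\CH^1(X)\otimes\CH^1(X)\to\CH^2(X)$ lies in $\Z\cdot c_X$ for the canonical class $c_X$, which is intrinsic and hence $f$-invariant; so the ``algebraic'' part of $\CH^2(X)$ is automatically fixed and all the difficulty is concentrated in $T(X)$.

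Next I would pass to cohomology and to motives. Since $f$ is symplectic it fixes the holomorphic two-form, so $f^*$ is the identity on $H^{2,0}(X)$ and on $H^{0,2}(X)$; as the transcendental Hodge structure $T(X)_\Q\subset H^2(X,\Q)$ is irreducible, a Hodge isometry fixing $H^{2,0}$ must be the identity on all of $T(X)_\Q$ (this is Nikulin's observation that the coinvariant lattice of a symplectic automorphism is negative definite and contained in $\NS(X)$). Therefore the correspondence $\gamma:=\Gamma_f-\Delta_X$ acts as zero on the transcendental cohomology $H^2_{\mathrm{tr}}(X,\Q)$. Passing to the transcendental Chow--K\"unneth summand $\mathfrak{t}(X)$ of the motive of $X$---the summand whose realization is $H^2_{\mathrm{tr}}$ and whose group of zero-cycles is exactly $T(X)$---this says precisely that the endomorphism of $\mathfrak{t}(X)$ induced by $\gamma$ is homologically trivial.

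The crux, and the \emph{main obstacle}, is to upgrade ``homologically trivial on $\mathfrak{t}(X)$'' to ``zero on $T(X)$''. The tool I would use is Kimura--O'Sullivan finite-dimensionality: if the motive of $X$ is finite-dimensional, then any homologically trivial endomorphism of $\mathfrak{t}(X)$ is nilpotent. Writing $\Gamma_f|_{\mathfrak{t}(X)}=\id+g$ with $g$ nilpotent, for $f$ of finite order the element $\id+g$ is unipotent and torsion, which in characteristic zero forces $g=0$; the infinite-order case reduces to this after passing to the coinvariant part, or uses the same nilpotence directly on the transcendental motive. Either way $\gamma_*=0$ on $T(X)$, i.e.\ $f_*=\id$, as desired. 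The hard part is exactly that finite-dimensionality of the motive of a $K3$ surface is \emph{not} known in general: it is essentially a form of Bloch's conjecture for these surfaces. It is, however, available for $K3$s of abelian type---Kummer surfaces and those dominated by products of curves---which is why one should expect an unconditional proof precisely for Kummer surfaces of product type, while the general case must remain conditional on finite-dimensionality.
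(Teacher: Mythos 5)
The statement you were asked to prove is not a theorem of this paper at all: it is Huybrechts' conjecture, which the paper records as open and cites as known only when $\Aut_s(X)$ is generated by elements of finite order (by Voisin and by Huybrechts, using spreading/decomposition-of-the-diagonal and derived-category techniques, not motivic finite-dimensionality). So no complete proof could be expected here, and yours is not one: the decisive step --- upgrading ``$\Gamma_f-\Delta_X$ is homologically trivial on the transcendental motive $t_2(X)$'' to ``nilpotent, hence zero'' --- invokes Kimura--O'Sullivan finite-dimensionality of the motive of $X$, which is itself an open conjecture for a general projective $K3$ surface (known essentially only for $K3$s of abelian type, e.g.\ Kummer surfaces). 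You acknowledge this, but it must be said plainly: an argument whose pivotal input is an unproved conjecture is a conditional result, not a proof of the statement. That is the genuine gap.

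Even granting finite-dimensionality, your argument only closes for $f$ of finite order: from $\Gamma_f|_{t_2(X)}=\id+g$ with $g$ nilpotent and $(\id+g)^m=\id$ one gets $g=0$ in characteristic zero, but for $f$ of infinite order nilpotence of $g$ alone does not force $g=0$ (infinite-order unipotent automorphisms exist), and ``passing to the coinvariant part'' is not an argument --- the coinvariant lattice controls cohomology, where triviality is already known, not the Chow-theoretic kernel you need to kill. So conditionally you recover exactly the finite-order case, which is the case already proved unconditionally. It is worth noting that your strategy closely parallels the paper's own conditional treatment of its higher-Chow analogue (Proposition \ref{condproof}): there too one uses that a symplectic automorphism acts trivially on $T(X)$, and then a motivic conjecture --- Ayoub's conservativity rather than finite-dimensionality --- transfers this to $t_2(X)$; and there too the argument is restricted to finite-order automorphisms, since the projector $(\id+\rho^*+\cdots+(\rho^{m-1})^*)/m$ requires finite order. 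Your cohomological step (irreducibility of the transcendental Hodge structure forces a symplectic automorphism to act trivially on $T(X)_\Q$) and your use of Beauville--Voisin for the decomposable part are both correct; the obstruction is, and remains, the motivic input.
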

An automorphism of a $K3$ surface is called \textit{symplectic} if it acts trivially on a non-vanishing 2-form.
Conjecture \ref{Huybrechts} was proved in cases where $\Aut_s(X)$ is generated by elements of finite order (\cite{Voi}, \cite{Huysymp}).

This paper proposes the following analogue of Conjecture \ref{Huybrechts} and provides some supporting evidence for it.
\begin{conje}\label{main}
For a $K3$ surface $X$, $\Aut_s(X)$ acts trivially on $\CH^2(X,1)_\ind\otimes \Q$.
\end{conje}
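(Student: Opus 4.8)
The plan is to reduce the statement to a question about the transcendental part of the motive of $X$ and to transport the (easy) triviality of the symplectic action on transcendental cohomology down to the higher Chow group. First I would fix the standard presentation of $\CH^2(X,1)$ as the cohomology of the Gersten complex
$$K_2^M(\C(X)) \lra \bigoplus_{C} \C(C)^\times \lra \bigoplus_{x} \Z,$$
so that a class is a finite sum $\sum_i (C_i,f_i)$ of curves with rational functions satisfying $\sum_i \div(f_i)=0$. The decomposable subgroup is the image of $\NS(X)\otimes\C^\times$ (pairs $(C,a)$ with $a$ constant), and $\CH^2(X,1)_\ind\otimes\Q$ is the quotient. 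This presentation is manifestly functorial for $\sigma\in\Aut_s(X)$, so the operator $\sigma_*$ is defined and the goal is $\sigma_*=\id$.

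The driving mechanism is the real Beilinson regulator. Since $H^3(X,\R)=0$ for a $K3$ surface, the regulator on $\CH^2(X,1)$ lands in $H^{1,1}(X,\R)$, decomposable classes map into $\NS(X)\otimes\R$, and the induced transcendental regulator
$$r_\ind:\ \CH^2(X,1)_\ind\otimes\R \lra \frac{H^{1,1}(X,\R)}{\NS(X)\otimes\R}\ \cong\ (T(X)\otimes\R)\cap H^{1,1}(X)$$
is equivariant for $\sigma$. Here I would feed in the Hodge-theoretic input: as $\sigma$ is symplectic it fixes the period $\omega$, so $\sigma^*$ acts on the transcendental Hodge structure $T(X)\otimes\Q$ as a Hodge isometry fixing the line $H^{2,0}$; since $\mathrm{End}_{\mathrm{Hodge}}(T(X)\otimes\Q)$ is a field (Zarhin) acting on the one-dimensional $H^{2,0}$ through a field embedding, the only such isometry is the identity. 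Hence $\sigma$ acts trivially on the target of $r_\ind$, and therefore on its image; equivalently $\sigma_*-\id$ maps $\CH^2(X,1)_\ind\otimes\Q$ into $\Ker r_\ind$.

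The hard part will be to upgrade this to triviality on all of $\CH^2(X,1)_\ind\otimes\Q$, i.e. to control $\Ker r_\ind$, which is the higher analogue of the Bloch--Beilinson difficulty for $0$-cycles. I would argue motivically: the indecomposable part is governed by the transcendental Chow--K\"unneth summand $t_2(X)$, and the claim becomes that $\sigma^*$ acts as the identity on $t_2(X)$ as a Chow motive. When the motive of $X$ is finite-dimensional in the sense of Kimura--O'Sullivan (known for Kummer and abelian-type $K3$ surfaces), the nilpotence theorem shows that $\sigma^*-\id$, being homologically trivial on $t_2(X)$, is a nilpotent correspondence, so $\sigma^*=\id+n$ is unipotent on the Chow realization; if moreover $\sigma$ has finite order, a finite-order unipotent operator in characteristic zero is the identity, giving $\sigma_*=\id$. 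This parallels the finite-order case of Huybrechts' conjecture on $0$-cycles, and it is precisely where I expect the argument to stall for infinite-order symplectic automorphisms (the unipotent part $n$ cannot then be forced to vanish) and for general $K3$ surfaces (finite-dimensionality is unknown).

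For the unconditional Kummer case $X=\mathrm{Km}(E_1\times E_2)$ I would sidestep this obstruction by explicit computation: describe generators of $\CH^2(X,1)_\ind$ in terms of the sixteen exceptional curves and the images of the elliptic curves $E_i$, rewrite the induced symplectic automorphisms through the abelian surface $E_1\times E_2$, and verify directly that every generator is fixed modulo decomposables, using the explicit transcendental regulator together with known generation results for such surfaces. Throughout, the genuine obstacle is the passage from cohomological triviality to rational equivalence on $\Ker r_\ind$; the conditional statements serve to isolate the exact inputs — finite-dimensionality and injectivity of the transcendental regulator — whose availability would remove it.
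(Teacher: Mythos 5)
The statement you are attacking is a conjecture, and like the paper your proposal only yields partial results, so the comparison is between your partial strategies and the paper's. Your regulator reduction is essentially the paper's Proposition \ref{BH}: since a symplectic $\sigma$ acts trivially on $T(X)$ (your Zarhin argument is a correct substitute for the paper's citation of Huybrechts' book), $\sigma^*-\id$ lands in the kernel of the transcendental regulator, so injectivity of that map after tensoring with $\Q$ gives Conjecture \ref{main}; the paper links this hypothesis to the de Jeu--Lewis amended Beilinson--Hodge conjecture (your real-regulator variant is a slightly stronger hypothesis than the paper's map to $J(T(X)^\vee)$, but the logic is identical). Your motivic step, by contrast, takes a genuinely different route from the paper's conditional result (Proposition \ref{condproof}): the paper assumes Ayoub's conservativity conjecture to conclude that finite-order symplectic automorphisms act trivially on $t_2(X)$, whereas you use Kimura--O'Sullivan finite-dimensionality plus the nilpotence theorem, and since nilpotence of homologically trivial endomorphisms of a finite-dimensional motive is a \emph{theorem}, your argument is unconditional whenever finite-dimensionality is known. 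That is a real gain. What your methods cannot reach is the paper's main unconditional result (Theorem \ref{newmainthm}): for an arbitrary $K3$ surface, with no motivic hypothesis, the whole group $\mathrm{MW}_{\mathrm{tor}}$ of translations by torsion sections of elliptic fibrations acts trivially on $\CH^2(X,1)_\ind\otimes\Q$. The paper proves this geometrically in three steps --- cycles supported on fibers via Kodaira's classification, section-type cycles via explicit symbols in $K_2^M(\C(X))$, and the general case by base change --- and that theorem is what powers its Kummer corollary.

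The genuine gap is your plan for the unconditional Kummer case. You propose to ``describe generators of $\CH^2(X,1)_\ind$'' in terms of the sixteen exceptional curves and check that each is fixed; no such generation results exist. The group $\CH^2(X,1)_\ind$ of a $K3$ surface is not known to be finitely generated --- even for $\mathrm{Km}(E\times F)$ its size is unknown, and only sporadic explicit elements have ever been constructed --- so ``verify directly that every generator is fixed'' is not an executable step. The irony is that you do not need it: by Keum--Kond\=o \cite{KK}, for generic non-isogenous $E,F$ the group $\Aut_s(\mathrm{Km}(E\times F))$ is generated by $28$ symplectic \emph{involutions}; these have finite order, $\mathrm{Km}(E\times F)$ has finite-dimensional motive (it is dominated by a blow-up of the abelian surface $E\times F$), and Kahn's isomorphism $\CH^2(X,1)_\ind\otimes\Q\simeq H^3_{\mathcal{M}}(t_2(X),\Q(2))$ (Theorem \ref{kahisom}) transports triviality on $t_2(X)$ down to the higher Chow group. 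So your own nilpotence argument, applied to each of the $28$ generators, closes the Kummer case unconditionally --- by a route genuinely different from the paper's, which instead recognizes those involutions as translations by $2$-torsion sections of elliptic fibrations and applies Theorem \ref{newmainthm}.
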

Here, $\CH^2(X,1)_{\ind}$ is the \textit{indecomposable part} of $\CH^2(X,1)$, which is defined as the cokernel of the map $\mathrm{Pic}(X)\otimes \C^\times \to \CH^2(X,1)$ induced by the intersection product.
Since the $\Aut_s(X)$-action on $\mathrm{Pic}(X)$ is non-trivial, passing to the indecomposable part is essential.
We also need to restrict our attention to symplectic automorphisms because the action of non-symplectic automorphisms is known to be non-trivial in some cases.
The latter fact is used in the construction of non-trivial elements of $\CH^2(X,1)_\ind$ (e.g., \cite{sato}).

The relationship between Conjecture \ref{Huybrechts} and Conjecture \ref{main} can be explained in termes of motives as follows.
For a $K3$ surface $X$, let $t_2(X)$ be the \textit{transcendental part of the Chow motive} as defined in \cite{KMP}.
By the result of \cite[Theorem 2]{Kah}, we have the following natural isomorphisms.
\begin{equation}
\begin{aligned}
& \CH^2(X)\otimes \Q \simeq H^4_{\mathcal{M}}(t_2(X),\Q(2)) \\
& \CH^2(X,1)_\ind \otimes \Q \simeq H^3_{\mathcal{M}}(t_2(X),\Q(2))
\end{aligned}
\end{equation}
In other words, $\CH^2(X)\otimes \Q$ and $\CH^2(X,1)_\ind \otimes \Q$ arise from the same motives.
If Ayoub's conservativity conjecture \cite{ayoub} holds, symplectic automorphisms of finite order act trivially on $t_2(X)$, thus Conjecture \ref{Huybrechts} and Conjecture \ref{main} follow when $\Aut_s(X)$ is generated by elements of finite orders.

The bulk of this paper is devoted to producing unconditional results on Conjecture \ref{main}.
The typical examples of symplectic automorphisms are translations by sections of elliptic fibrations.
For a $K3$ surface $X$, let $\mathrm{MW}_{\mathrm{tor}}$ be the subgroup of $\Aut_s(X)$ generated by translations by torsion sections of all elliptic fibration structures\footnote{
Note that a $K3$ surface often has several different elliptic fibration structures.
}  on $X$.
In this paper, we prove the following.

\begin{thm}[Theorem \ref{newmainthm}]\label{mainthmintro}
The subgroup $\mathrm{MW}_{\mathrm{tor}}$ acts trivially on $\CH^2(X,1)_{\mathrm{ind}}\otimes \Q$.
In particular, for a $K3$ surface such that $\mathrm{MW}_\mathrm{tor}=\Aut_s(X)$, Conjecture \ref{main} holds.
\end{thm}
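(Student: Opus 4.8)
The plan is to reduce to a single translation and then analyze it through the generic fibre of the relevant elliptic fibration, where the motive is that of an elliptic curve and hence well understood, so that the unknown finiteness properties of $t_2(X)$ are never needed. Since $\mathrm{MW}_{\mathrm{tor}}$ acts on the $\Q$-vector space $V:=\CH^2(X,1)_\ind\otimes\Q$ through a homomorphism to $\mathrm{GL}(V)$, it suffices to show that each generator acts trivially. So I fix an elliptic fibration $\pi\colon X\to\P^1$, a torsion section $P$ of order $n$, and the associated translation $\tau=t_P$; on the generic fibre $E=X_\eta$, an elliptic curve over $K=\C(\P^1)$, $\tau$ restricts to translation by the $n$-torsion point $P\in E(K)$.

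Next I would use Bloch's localization sequence for $\pi$ to obtain a $\tau$-equivariant exact sequence
\begin{equation*}
0\lra W\lra V\lra \CH^2(E,1)_\ind\otimes\Q,
\end{equation*}
where $W$ is spanned by the classes supported on the fibres, i.e.\ the image of $\bigoplus_{c}\CH^1(X_c,1)$. Restriction sends decomposables to decomposables, so the map is well defined, and it is $\tau$-equivariant because $\tau$ preserves $\pi$. Modulo decomposables only reducible fibres contribute, and among Kodaira types only those of type $I_m$ (whose dual graph is a cycle, all others being trees) support indecomposable classes, so $W$ is the span of the indecomposable $I_m$-fibre classes.

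I would then treat the two graded pieces separately. On the quotient $V/W$, which embeds into $\CH^2(E,1)_\ind\otimes\Q$, a quotient of $H^3_{\M}(h^1(E),\Q(2))$, I claim $\tau$ acts as the identity: translation acts trivially on $H^1$ of an abelian variety, and since the realization is faithful on $\mathrm{End}(h^1(E))_\Q=\mathrm{End}(E)_\Q$, the induced $\tau^*$ on the Chow motive $h^1(E)$ is the identity, hence so is its action on $H^3_{\M}(h^1(E),\Q(2))$ and on any quotient. On $W$ I would compute directly: for an $I_m$ fibre the cancellation of divisors at the nodes forces all multiplicities around the cycle to be equal, and $\tau$ merely rotates the components while preserving this rigid structure, so on each component the ratio of the transported function and the original has trivial divisor, hence is constant; therefore $\tau^*\xi-\xi$ is a sum of terms $(\Gamma_i,\text{const})$, i.e.\ decomposable, and $\tau$ acts trivially on $W$.

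Finally, since $\tau^*$ is the identity on both $W$ and $V/W$, the operator $\tau^*-\id$ maps $V$ into $W$ and kills $W$, so $(\tau^*-\id)^2=0$ and $\tau^*$ is unipotent; as $\tau$ has order $n$ we also have $(\tau^*)^n=\id$, and a unipotent operator of finite order on a $\Q$-vector space is the identity. Thus every generator acts trivially on $V$, which proves the theorem, the final assertion being immediate. I expect the main obstacle to be the third step, namely making the motivic input precise---that the generic-fibre restriction is compatible with the $h^1$-summand and that $\tau^*$ is literally the identity on the \emph{Chow} motive $h^1(E)$, not merely on realizations---which is exactly where the good properties of abelian-variety motives, unavailable for $t_2(X)$ itself, are used. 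It is worth noting that only the last step uses that $P$ is torsion: for a section of infinite order the same argument shows $\tau^*$ is unipotent but cannot exclude a nontrivial extension class, which is precisely why the unconditional statement is confined to $\mathrm{MW}_{\mathrm{tor}}$.
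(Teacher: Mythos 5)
Your outer scaffolding matches the paper: the reduction to one translation, the identification of the fiber part $W$ (the paper's $F(\pi)$) with only the multiplicative fibers $\mathrm{I}_m$ contributing and the rotation-invariance of their generators (Propositions \ref{generator} and \ref{singularfiberprop}), and the closing ``unipotent of finite order $\Rightarrow$ identity'' step (which is the paper's iteration $(\rho_D^m)^*\xi_\ind=\xi_\ind+m\xi'$) are all in the paper. The genuine gap is your exact sequence
\begin{equation*}
0\lra W\lra V\lra \CH^2(E,1)_\ind\otimes\Q .
\end{equation*}
Bloch's localization sequence is exact at $\CH^2(X,1)$ \emph{before} passing to indecomposables, and the two quotients do not match: on $X$ one kills $\mathrm{Pic}(X)\otimes\C^\times$, while on $E=X_\eta$ one kills $\mathrm{Pic}(E)\otimes K^\times$ with $K=\C(S)$. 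Hence the kernel of $V\to \CH^2(E,1)_\ind\otimes\Q$ contains, besides $W$, every class represented by horizontal curves carrying functions \emph{pulled back from the base}, e.g.\ $\xi=(C_1,\pi^*f)+(C_2,\pi^*f^{-1})$ for sections $C_1,C_2$ and $f\in \C(S)^\times$ with divisor supported on points where the two sections meet the same fiber point: its restriction to $E$ is $([C_{1,\eta}]-[C_{2,\eta}])\otimes f$, decomposable over $K$. Such cycles (M\"uller-Stach's classical construction of indecomposable classes on elliptic surfaces) are in general \emph{not} in $F(\pi)+\CH^2(X,1)_\dec$, even rationally: both $F(\pi)$ and the decomposables die under the transcendental regulator \eqref{transreg} (fiber-supported classes land in the $\mathrm{NS}$-part of $H^2$, which maps to $0$ in $T(X)^\vee$), whereas these classes can have nonzero transcendental regulator. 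So $V/W$ does not inject into $\CH^2(E,1)_\ind\otimes\Q$; with your tools you only get that $\tau^*-\id$ maps $V$ into the larger kernel $V_0$ and that $V_0$ is $\tau^*$-stable, not $(\tau^*-\id)^2=0$, and the argument collapses.

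This missing case is exactly what Sections 2.4--2.5 of the paper exist for: Proposition \ref{ofsectiontype} handles cycles whose horizontal part sits on sections (with arbitrary functions, including pulled-back ones) by building explicit symbols $\{\widetilde\varphi_i,\psi_i\}\in K_2^M(\C(X))$ from the group-law rational equivalence $C_{i,\eta}+Z_\eta\sim \rho_D^{-1}(C_i)_\eta+D_\eta$ and computing tame symbols to show $\rho_D^*\xi_\ind-\xi_\ind\in F(\pi)$ directly; Proposition \ref{mainthm} then reduces arbitrary cycles (whose horizontal curves are multisections) to this case by a Galois base change $S'\to S$ splitting them into sections, followed by the transfer $f_*f^*=N$, harmless after $\otimes\,\Q$. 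Neither step has a counterpart in your proposal, and I do not see how to avoid them. A secondary, more repairable issue: your generic-fiber argument via $h^1(E)$ is not quite right as stated, since translation does not preserve the Chow--K\"unneth decomposition based at the origin (there are possibly nonzero off-diagonal components such as $h^1(E)\to h^0(E)$, an element of $E(K)\otimes\Q$), and $\CH^2(E,1)_\ind\otimes\Q$ also receives the $h^0$-contribution $\CH^2(K,1)\otimes\Q$, so ``identity on $h^1$'' does not immediately control the indecomposable quotient; for a \emph{torsion} section this is fixable by running the Fourier--Mukai argument of Proposition \ref{abeliansurface} for $E$ over $K$, which gives $\tau^*=\id$ on all of $\CH^2(E,1)\otimes\Q$. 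But the exactness claim is the real gap, and it is precisely the nontrivial content of the paper's proof.
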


By Theorem \ref{mainthmintro},  we confirm Conjecture \ref{main} in the following case.

\begin{cor}[Corollary \ref{Kummer}]
For non-isogenus generic elliptic curves $E,F$, let $\mathrm{Km}(E\times F)$ be the Kummer surface associated with the product $E\times F$.
Then Conjecture\ref{main} is true for $\mathrm{Km}(E\times F)$.
\end{cor}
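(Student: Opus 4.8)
The plan is to deduce the statement directly from Theorem \ref{mainthmintro}. Since that result already shows that $\mathrm{MW}_{\mathrm{tor}}$ acts trivially on $\CH^2(X,1)_{\ind}\otimes\Q$, it suffices to prove that for $X=\mathrm{Km}(E\times F)$ with $E,F$ non-isogenous and generic one has the equality $\mathrm{MW}_{\mathrm{tor}}=\Aut_s(X)$. The inclusion $\mathrm{MW}_{\mathrm{tor}}\subseteq\Aut_s(X)$ is automatic, so the work splits into producing enough torsion sections on one side and bounding $\Aut_s(X)$ on the other.

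First I would write down the two tautological elliptic fibrations. Composing the projections $E\times F\to F$ and $E\times F\to E$ with the quotients $F\to F/\{\pm1\}\cong\P^1$ and $E\to E/\{\pm1\}\cong\P^1$, and passing to the Kummer quotient, yields elliptic fibrations $\pi_F\colon X\to\P^1$ and $\pi_E\colon X\to\P^1$ whose generic fibres are isomorphic to $E$ and $F$ respectively, each carrying a zero section coming from $\{0\}\times F$ (resp.\ $E\times\{0\}$). The translations by $E[2]$ on $E\times F$ descend to $X$, preserve $\pi_F$, and act on the generic fibre $\cong E$ as translation by $2$-torsion; hence they are torsion sections of $\pi_F$, and symmetrically the translations by $F[2]$ are torsion sections of $\pi_E$. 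Since these translations together generate the group $A[2]\cong(\Z/2)^4$ of $2$-torsion translations on $A=E\times F$, I obtain $\mathrm{MW}_{\mathrm{tor}}\supseteq A[2]\cong(\Z/2)^4$.

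It remains to prove the reverse bound $\Aut_s(X)\subseteq A[2]$, equivalently $\Aut_s(X)=(\Z/2)^4$. Here I would first recall that $\Aut_s(X)$ is finite and that every symplectic automorphism acts trivially on the transcendental lattice $T(X)$, so that its coinvariant lattice is a negative definite primitive sublattice of $\NS(X)$. For generic non-isogenous $E,F$ one has $\NS(E\times F)=\Z^2$ and hence $\rho(X)=18$, with $\NS(X)$ generated by the rank-$16$ Kummer lattice (the span of the sixteen exceptional curves) and the rank-$2$ image of $\NS(E\times F)$; the $2$-torsion translations are precisely the symplectic group realizing the Kummer lattice as its coinvariant lattice. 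I would then argue that any symplectic automorphism $g$ acts trivially on the rank-$2$ part: acting trivially on $T(X)$ and as a Hodge isometry on $\NS(X)$, $g$ induces an automorphism of the Hodge structure of $E\times F$ commuting with the Kummer involution, and the genericity hypotheses---no complex multiplication and $\Hom(E,F)=0$---force this induced automorphism into the group generated by $A[2]$ and $\pm1$. Passing back to $X$, where $-1$ becomes the identity, gives $\Aut_s(X)=A[2]$.

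The main obstacle is precisely this last step: ruling out \emph{extra} symplectic automorphisms not induced from the abelian surface. Concretely, one must show that no finite symplectic group strictly larger than $(\Z/2)^4$ can act, i.e.\ that no negative definite overlattice of the Kummer lattice of rank $17$ embeds primitively in $\NS(X)$ compatibly with the trivial action on $T(X)$. I expect to control this by combining Nikulin's and Mukai's classification of symplectic group actions (and their admissible coinvariant lattices) with the explicit quadratic form on $T(X)$ determined by the generic pair $(E,F)$; the non-isogeny and the absence of CM are exactly what eliminate the extra Hodge isometries that could enlarge the group. Once $\Aut_s(X)=(\Z/2)^4=\mathrm{MW}_{\mathrm{tor}}$ is established, the corollary follows immediately from Theorem \ref{mainthmintro}.
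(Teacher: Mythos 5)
Your first inclusion $\mathrm{MW}_{\mathrm{tor}}\supseteq(\Z/2)^4$ is fine, but the reverse bound is where the proof breaks: the equality $\Aut_s(X)=(\Z/2)^4$ is false, and so is the assertion that $\Aut_s(X)$ is finite, on which your whole lattice-theoretic argument rests. For $X=\mathrm{Km}(E\times F)$ with $E,F$ generic and non-isogenous, $\Aut(X)$ is infinite, while the image of $\Aut(X)\to O(T(X))$ is a finite (cyclic) group of Hodge isometries; since $\Aut_s(X)$ is precisely the kernel of this map, it has finite index in $\Aut(X)$ and is therefore infinite. Concretely, $X$ carries many elliptic fibrations besides the two induced by the projections of $E\times F$, some with sections of infinite order, and translation by such a section is a symplectic automorphism of infinite order; likewise, composing $2$-torsion translations coming from two different fibrations generally yields an element of infinite order. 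Consequently no appeal to Nikulin's or Mukai's classification of \emph{finite} symplectic group actions (nor to negative definiteness of the coinvariant lattice, which fails for infinite groups) can bound $\Aut_s(X)$. A second, independent problem is your lifting step: a symplectic automorphism of $X$ need not preserve the sixteen exceptional curves, hence need not induce any automorphism of the Hodge structure of $E\times F$ commuting with the Kummer involution; the $2$-torsion translations attached to the non-obvious fibrations are exactly examples that do not lift.

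The paper's proof requires no such bound: it invokes Keum--Kond\=o \cite[Theorem 5.3 and Section 4.1]{KK}, which states that the (infinite) group $\Aut_s(X)$ is generated by $28$ symplectic involutions, each of which is the translation by a $2$-torsion section of \emph{some} elliptic fibration on $X$. This gives $\Aut_s(X)=\mathrm{MW}_{\mathrm{tor}}$ as an equality of infinite groups --- the point being that $\mathrm{MW}_{\mathrm{tor}}$ is defined using all elliptic fibration structures on $X$, not just the two tautological ones --- and then the latter part of Theorem \ref{newmainthm} applies, exactly as in your final sentence. To repair your argument you would have to replace the false claim $\Aut_s(X)=(\Z/2)^4$ by this generation statement (or reprove it), which is the actual content hiding behind the corollary.
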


The proof of Theorem \ref{mainthmintro} is divided into three steps.
First, we show that the translation acts trivially on cycles supported on fibers, using Kodaira's classification of singular fibers \cite{Kod}.
In the second step, by constructing symbols in the Milnor $K_2$-group explicitly, we prove that the translation acts trivially on the cycles supported on sections, modulo cycles supported on fibers.
Finally, by a base change argument, we reduce the proof of Theorem \ref{mainthmintro} to the previous two cases.
Since we use the base change argument in the proof, our result in fact holds for elliptic surfaces that are not necessarily $K3$ surfaces (Proposition \ref{mainthm}).

Conjecture \ref{main} is also related to the injectivity of the \textit{transcendental regulator map}
\begin{equation}\label{transregintro}
\CH^2(X,1)_\ind\to  J(T(X)^\vee).
\end{equation}
In particular, for a $K3$ surface $X$ such that \eqref{transregintro} is injective after tensoring $\Q$, Conjecture \ref{main} holds (Proposition \ref{BH}).
The injectivity of \eqref{transregintro} after tensoring $\Q$ follows from the \textit{amended version of Beilinson's Hodge conjecture} proposed by de Jeu and Lewis \cite{LR}, so their conjecture gives another support for Conjecture \ref{main}.

Finally, we mention a variant of Conjecture \ref{main}.
It might be natural to expect that Conjecture \ref{main} holds for $\CH^2(X,1)_\ind$ of $\Z$-coefficients.
In relation to this strong version, we have the following result.
 
\begin{prop}[Proposition \ref{torprop}]\label{torpropmain}
For a $K3$ surface $X$, $\Aut_s(X)$ acts trivially on the torsion part $(\CH^2(X,1)_\ind)_\mathrm{tor}$.
\end{prop}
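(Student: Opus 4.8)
The plan is to exploit the fact that a torsion subgroup is a much more rigid object than the full group, and in particular that the $\Aut_s(X)$-action on the finite-dimensional $\Q$-vector space $\CH^2(X,1)_\ind\otimes\Q$ interacts with the torsion through the exact sequence
\begin{equation}
0\lra (\CH^2(X,1)_\ind)_\tr \lra \CH^2(X,1)_\ind \lra \CH^2(X,1)_\ind/(\text{torsion}) \lra 0.
\end{equation}
My first move would be to pin down the structure of $(\CH^2(X,1)_\ind)_\tr$ itself. For $K3$ surfaces one expects this torsion to be controlled by Hodge-theoretic or \'etale data that carries essentially no symplectic information: concretely, I would look for a natural injection of the torsion into a group built from $H^3$ of $X$ (e.g.\ via the regulator / cycle class into Deligne or \'etale cohomology, along the lines of the transcendental regulator map \eqref{transregintro} but integrally), and argue that on the relevant torsion target the symplectic action is forced to be trivial.

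The second step is to make precise \emph{why} symplectic automorphisms act trivially on that target. The guiding principle, already visible in the motivic reformulation in the introduction, is that $\CH^2(X,1)_\ind\otimes\Q$ and $\CH^2(X)\otimes\Q$ both come from the transcendental motive $t_2(X)$, and a symplectic automorphism acts on the transcendental lattice $T(X)$ fixing the period. A finite symplectic automorphism has finite order, and by Nikulin's results its action on $T(X)\otimes\Q$ is trivial; the torsion of $\CH^2(X,1)_\ind$, being finite, factors through a finite piece of cohomology on which this triviality can be leveraged. So concretely I would: (i) show the torsion is finite, or at least that each torsion element is fixed by a finite-index consideration; (ii) identify its natural receptacle inside the (co)homology of $X$ where the symplectic action is understood; (iii) conclude triviality there.

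The cleanest route, and the one I would try first, is to reduce to the finite-order case \emph{for free}. A torsion element $\xi\in(\CH^2(X,1)_\ind)_\tr$ and its orbit under any single $\varphi\in\Aut_s(X)$ lie in a finitely generated subgroup; if I can show every $\varphi$ acts on the torsion through a finite quotient of $\Aut_s(X)$, then I may invoke the already-known finite-order case. Here the key input is that $\Aut_s(X)$ acts on $H^2(X,\Z)$, hence on $T(X)$, through a \emph{finite} group (the image in $\mathrm{O}(T(X))$ is finite since it preserves the period and the lattice), and the torsion of $\CH^2(X,1)_\ind$ should be detected by this finite linear-algebraic data. Thus the action on torsion factors through a finite group generated by elements of finite order, and Nikulin-type triviality of finite symplectic actions on $T(X)$ (which does \emph{not} rely on any conjecture) gives the result unconditionally.

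\textbf{The main obstacle} will be step (ii): constructing a genuinely \emph{integral}, functorial, and \emph{injective} regulator on the torsion part and identifying its image with a symplectically-understood subgroup of $H^3$ or $H^4_{\mathcal{M}}$. The rational regulator \eqref{transregintro} is only conjecturally injective, so I cannot simply tensor with $\Q$; on torsion I need an integral statement, which typically comes from \'etale realizations and the Bloch–Kato / comparison machinery rather than from Hodge theory alone. The delicate point is ruling out exotic torsion on which the symplectic action could be non-trivial, i.e.\ showing that \emph{all} of $(\CH^2(X,1)_\ind)_\tr$ is captured by the finite quotient through which $\Aut_s(X)$ acts on $T(X)$; if some torsion fails to be detected there, the reduction to the finite-order case breaks and one would instead need a direct symbolic argument analogous to the Milnor-$K_2$ construction used in the proof of Theorem \ref{mainthmintro}.
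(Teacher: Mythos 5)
Your guiding idea --- identify the torsion of $\CH^2(X,1)_\ind$ with cohomological data built from the transcendental lattice $T(X)$, then conclude from the triviality of the symplectic action on $T(X)$ --- is exactly the shape of the paper's argument. But the step you flag as the ``main obstacle'' in (ii) is not a difficulty to be worked around later: it is the entire content of the proof, and you never supply it. The paper resolves it by citing an existing theorem of Kahn \cite[Theorem 1]{Kah}, which gives a canonical (hence $\Aut(X)$-equivariant) isomorphism $\mathrm{Br}(X)(1)\xrightarrow{\ \sim\ }(\CH^2(X,1)_\ind)_{\mathrm{tor}}$, where $\mathrm{Br}(X)(1)=\varinjlim_n {}_n\mathrm{Br}(X)\otimes\mu_n$. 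Combined with van Geemen's identification $\mathrm{Br}(X)\simeq T(X)^\vee\otimes(\Q/\Z)$ for $K3$ surfaces (\cite[p.~225]{vGe}) and the fact that symplectic automorphisms act trivially on $T(X)$, the proposition follows in two lines. Since your proposal leaves precisely this integral identification unconstructed, it is a plan rather than a proof; the gap is genuine.

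Your fallback route --- show the action on torsion factors through a finite quotient of $\Aut_s(X)$ and then ``invoke the already-known finite-order case'' --- would fail even if the factoring were established, for two reasons. First, there is no already-known unconditional finite-order case for the torsion of $\CH^2(X,1)_\ind$: the finite-order results of Voisin and Huybrechts (\cite{Voi}, \cite{Huysymp}) concern $\CH^2(X)$, i.e.\ Conjecture \ref{Huybrechts}, and the paper's finite-order statement for $(2,1)$-cycles (Proposition \ref{condproof}) is conditional on Ayoub's conservativity conjecture. Second, factoring through a finite group does not by itself yield triviality of the action; you would still need the torsion to be faithfully detected by $T(X)$-data, which is again the missing Kahn isomorphism. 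Note also that your premise is weaker than what is true and needed: the image of $\Aut_s(X)$ in $\mathrm{O}(T(X))$ is not merely finite, it is trivial (\cite[p.~330]{HuyK3}), for \emph{all} symplectic automorphisms, not only those of finite order --- this stronger fact is what makes the paper's argument cover the whole group $\Aut_s(X)$ at once.
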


This is a direct consequence of the isomorphism between the torsion part of $\CH^2(X,1)_\ind$ and that of the Brauer group, which was proved in \cite[Theorem 1]{Kah}.
Since the torsion part of the target in \eqref{transregintro} is isomorphic to the Brauer group of $X$, it is plausible that the map \eqref{transregintro} induces an isomorphism between torsion parts.
If so, the integral version of Conjecture \ref{main} follows from Conjecture \ref{main} (Proposition \ref{rot}).
However, to the best of the author's knowledge, it is not clear whether \eqref{transregintro} induces the isomorphism between the torsion parts, so we are not sure about the integral version of Conjecture \ref{main}.

\subsection{Acknowledgement}
The author is sincerely grateful to Shohei Ma for many valuable suggestions and discussions on the contents of this paper.
In particular, Proposition \ref{abeliansurface} was taught by him.
This work was supported by JSPS KAKENHI 21H00971.

\subsection{Convention}
In this paper, we use the word \textit{variety} for an integral separated scheme of finite type over a field $k$.
For a $K3$ surface $X$, $\Aut_s(X)$ denotes the group of symplectic automorphisms of $X$.

\section{Translations on elliptic fibrations}
Let $X$ be a $K3$ surface and $\pi\colon X\to S$ be an elliptic fibration\footnote{
See Section 2.2 for the definition of elliptic fibrations in this paper.
Note that we assume the existence of a section.
Furthermore, since $K3$ surface are minimal, $\pi$ is always relatively minimal.}.
The set of sections of $\pi$ is denoted by $\mathrm{MW}(\pi)$, and has an abelian group structure induced by the elliptic fibration.
For each $D\in \mathrm{MW}(\pi)$, the translation by $D$ induces an automorphism of $X$, thus we have an injective map
\begin{equation}\label{mwtoauto}
\mathrm{MW}(\pi) \hookrightarrow \Aut(X).
\end{equation}
By the explicit description of 2-forms on $X$ in \cite[Section 5.13]{SS}, the translation acts trivially on $H^{2,0}(X)$, so the image of \eqref{mwtoauto} is in $\Aut_s(X)$.
In particular, we can regard the torsion part $\mathrm{MW}(\pi)_\mathrm{tor}$ as the subgroup of $\Aut_s(X)$ by the embedding \eqref{mwtoauto}.

\begin{defn}
For a $K3$ surface $X$, let $\mathrm{MW}_\mathrm{tor}\subset \Aut_s(X)$ denote the subgroup generated by elements of $\mathrm{MW}(\pi)_\mathrm{tor}$ where $\pi$ runs over all possible elliptic fibration structure on $X$.
\end{defn}
In this section, we prove the following Theorem \ref{newmainthm}.

\begin{thm}\label{newmainthm}
The subgroup $\mathrm{MW}_{\mathrm{tor}}$ acts trivially on $\CH^2(X,1)_{\mathrm{ind}}\otimes \Q$.
In particular, for a $K3$ surface such that $\mathrm{MW}_\mathrm{tor}=\Aut_s(X)$, Conjecture \ref{main} holds.
\end{thm}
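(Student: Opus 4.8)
The plan is to reduce the statement to a single translation and then carry out a vertical/horizontal analysis of the Gersten presentation of $\CH^2(X,1)$. Since the elements of $\Aut_s(X)$ acting trivially on $\CH^2(X,1)_\ind\otimes\Q$ form a subgroup, and $\mathrm{MW}_\mathrm{tor}$ is generated by the groups $\mathrm{MW}(\pi)_\mathrm{tor}$, it suffices to fix one elliptic fibration $\pi\colon X\to S$ and one torsion section $D\in\mathrm{MW}(\pi)_\mathrm{tor}$, of order $n$, and to show that the translation $\tau_D$ acts trivially. Write $K=k(S)$ and let $X_\eta$ be the generic fiber, an elliptic curve over $K$ on which $D$ induces an $n$-torsion point $D_\eta\in X_\eta(K)$. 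Representing a class by a sum $\sum_i(C_i,f_i)$ with $\sum_i\div(f_i)=0$, I split the curves $C_i$ into \emph{vertical} ones (contained in a fiber) and \emph{horizontal} ones (dominating $S$). Because $\pi\circ\tau_D=\pi$, the map $\tau_D$ respects this dichotomy: it sends each fiber to itself and permutes sections among sections. The localization sequence for the inclusion of the vertical locus then lets me separate a part supported on fibers, identified with $\bigoplus_s\CH^1(F_s,1)$, from a part detected on $X_\eta$, and I treat the two in turn.

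\emph{Step 1 (vertical classes).} Here $\tau_D$ preserves each fiber $F_s$ and acts on its components through the component group. Using Kodaira's classification \cite{Kod} of the possible $F_s$, I would check that $\tau_D$ fixes each vertical class in $\CH^2(X,1)\otimes\Q$ (up to decomposables): the components are rational curves, the translation permutes them by a fixed rotation determined by which component $D$ meets, and the resulting identity among the symbols $(C_j,f_j)$ can be verified type by type. Since only finitely many fiber types occur and a section meets the identity component of all but finitely many fibers, this is a finite case analysis.

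\emph{Step 2 (sections modulo fibers).} For a section $\sigma$ the translate is the section $\sigma+D$, and I must show that $(\sigma+D,\tau_{D*}f)-(\sigma,f)$ is trivial modulo vertical classes and decomposables. The key input is the torsion relation $n(D_\eta)-n(O)=\div(h)$ for a suitable $h\in K(X_\eta)^\times$, where $O$ is the zero section; such $h$ exists precisely because $D_\eta$ is $n$-torsion. A Steinberg symbol in $K_2^M(k(X_\eta))$ built from $f$ and $h$ then has a tame symbol relating $(\sigma,f)$ to its translate: a factor of $n$ appearing in the boundary along the torsion sections is harmless after $\otimes\Q$, and the remaining boundary is supported on vertical fibers and absorbed by Step 1. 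This is the step where the symbol must be written down explicitly and its boundary computed.

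\emph{Step 3 (base change and descent).} A general horizontal $C_i$ has generic point a closed point $x\in X_\eta$ with residue field $L=\kappa(x)$ finite over $K$. Choosing a finite cover $S'\to S$ with $k(S')\supseteq L$ and setting $X'=X\times_S S'$, the point $x$ becomes rational on the generic fiber of $X'\to S'$, so the pulled-back cycle is supported on sections; and since $D$ is defined over $S$ it lifts to a torsion section $D'$ on $X'$ with $\tau_{D'}$ covering $\tau_D$. Applying Steps 1 and 2 on $X'$ (which need not be $K3$, whence the elliptic-surface formulation of Proposition \ref{mainthm}) and pushing forward along the finite map $p\colon X'\to X$, the projection formula $p_*p^*=\deg(p)\cdot\id$ lets me divide by $\deg(p)$ after $\otimes\Q$ and conclude on $X$. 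I expect the main obstacle to be Step 2: producing the explicit $K_2$-symbol and controlling its vertical boundary terms is the delicate computation, with the Kodaira bookkeeping of Step 1 a close second.
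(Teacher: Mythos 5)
Your three-step architecture (fibers, then sections via an explicit $K_2$-symbol, then base change) matches the paper's proof, and your Step 2 is a legitimate variant of it: where the paper uses, for an \emph{arbitrary} section $D$, the rational equivalence $C_{i,\eta}+Z_\eta \sim \rho_D^{-1}(C_i)_\eta+D_\eta$ (at the cost of extra terms on $Z$ and $D$ that must be shown constant, Claim \ref{claim}), you use the torsion relation $n(D_\eta)-n(O)=\div(h)$, translated to each section, which avoids those terms at the cost of a factor $n$ that $\Q$-coefficients absorb. The trade-off is that the paper's symbol works for every section and hence also yields Proposition \ref{mainthm}(ii) (no multiplicative fibers, arbitrary sections); yours needs torsion from the start, which is enough for the theorem.

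The genuine gap is the endgame. After Steps 2 and 3 you know only that $\xi':=\rho_D^*(\xi_\ind)-\xi_\ind$ lies in $F(\pi)\otimes\Q$, and Step 1 tells you that $\rho_D^*$ restricts to the identity on $F(\pi)$. These two facts by themselves do \emph{not} imply $\xi'=0$: your phrase ``absorbed by Step 1'' is a non sequitur, since Step 1 says the vertical defect is \emph{fixed} by the translation, not that it \emph{vanishes}. Abstractly, an operator $g$ on a $\Q$-vector space $V$ with a subspace $W$ satisfying $g|_W=\id$ and $(g-\id)(V)\subset W$ is merely unipotent, $(g-\id)^2=0$, and need not be the identity. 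What closes the argument --- and what the paper does in the proof of Proposition \ref{mainthm}, case (i) --- is to invoke that $\rho_D$ itself has finite order $n$: since $\rho_D^*\xi'=\xi'$, iteration gives $(\rho_D^m)^*(\xi_\ind)=\xi_\ind+m\xi'$ for all $m$, and $\rho_D^n=\id$ forces $n\xi'=0$, hence $\xi'=0$ over $\Q$. You have every ingredient for this (finite order, $\Q$-coefficients, Steps 1 and 2), but the iteration argument is absent from your write-up; in your proposal the torsion of $D$ is used only to produce $h$, and without the iteration the proof stops at ``the defect lies in $F(\pi)\otimes\Q$,'' short of the statement.

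Two smaller points in Step 3. First, $X\times_S S'$ is in general singular, so you must pass to a resolution $X'\to X\times_S S'$; this $X'$ need not be relatively minimal, so your Step 1, which rests on Kodaira's classification, cannot be run on $X'$ as stated. The paper avoids this by proving only the defect statement (Proposition \ref{ofsectiontype}) upstairs, pushing it down via $f_*(F(\pi'))\subset F(\pi)$ to the relatively minimal $X$, and killing the defect there (Proposition \ref{singularfiberprop} plus the iteration). Second, the cover $S'$ must split \emph{all} horizontal components of the chosen representative simultaneously (e.g.\ a Galois closure containing every $\C(C_i)$), not one $C_i$ at a time, or the pulled-back cycle is not of section type.
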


Before proceeding the proof of Theorem \ref{newmainthm}, we will deduce Conjecture \ref{main} for a Kummer surfaces of product type by Theorem \ref{newmainthm}

\begin{cor}\label{Kummer}
For non-isogenus generic elliptic curves $E,F$, let $X=\mathrm{Km}(E\times F)$ be the Kummer surface associated with 
the product $E\times F$.
Then Conjecture\ref{main} is true for $X$.
\end{cor}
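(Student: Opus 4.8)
The plan is to apply Theorem \ref{newmainthm}: it suffices to prove the equality $\mathrm{MW}_\mathrm{tor}=\Aut_s(X)$ for $X=\mathrm{Km}(E\times F)$. Write $A=E\times F$, so that $X$ is the minimal resolution of $A/\langle -1\rangle$, and recall that translation by a $2$-torsion point $t\in A[2]$ is the only kind of translation of $A$ that commutes with $-1$ and hence descends to a symplectic automorphism of $X$; this produces an inclusion $A[2]\hookrightarrow \Aut_s(X)$ with $A[2]\cong(\Z/2)^4$. I would then prove two things: that these translations already lie in $\mathrm{MW}_\mathrm{tor}$, and that they exhaust $\Aut_s(X)$. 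Together these give $\mathrm{MW}_\mathrm{tor}=A[2]=\Aut_s(X)$, and the corollary follows at once.

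For the inclusion $A[2]\subseteq\mathrm{MW}_\mathrm{tor}$ I would use the two projections. The map $A\to E$ is equivariant for $-1$ and descends to an elliptic fibration $\pi_F\colon X\to E/\langle -1\rangle\cong\P^1$ whose generic fibre is $F$; symmetrically, projection to $F$ gives $\pi_E\colon X\to\P^1$ with generic fibre $E$. Take as zero section of $\pi_F$ the image of $E\times\{0_F\}$. For $t_2\in F[2]$, the automorphism ``translation by $(0,t_2)$'' restricts on each fibre $\cong F$ to translation by the $2$-torsion point $t_2$, so it is exactly translation by the $2$-torsion section of $\pi_F$ cut out by the image of $E\times\{t_2\}$; hence it lies in $\mathrm{MW}(\pi_F)_\mathrm{tor}\subset\mathrm{MW}_\mathrm{tor}$. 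By the same argument with the roles of $E$ and $F$ exchanged, translation by $(t_1,0)$ with $t_1\in E[2]$ lies in $\mathrm{MW}(\pi_E)_\mathrm{tor}$. Since $A[2]=(E[2]\times\{0\})\oplus(\{0\}\times F[2])$ is generated by these two families, all of $A[2]$ lies in $\mathrm{MW}_\mathrm{tor}$.

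It remains to establish the reverse inclusion $\Aut_s(X)\subseteq A[2]$, and this is the step where the genericity hypothesis enters and which I expect to be the main obstacle. Let $g\in\Aut_s(X)$. Since $g$ acts trivially on $H^{2,0}(X)$ and the action of any automorphism on $T(X)$ factors through a finite cyclic group, $g$ acts trivially on the transcendental lattice $T(X)$, which for non-isogenous $E,F$ without complex multiplication carries a Hodge structure with minimal endomorphisms. The key input I would invoke here is that every symplectic automorphism of such a generic Kummer surface preserves the Kummer structure, i.e.\ is induced by an affine automorphism $a\mapsto\phi(a)+t$ of $A$ with $\phi\in\Aut(A)$; this is precisely where genericity is essential (cf.\ Proposition \ref{abeliansurface}). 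For $E,F$ generic and non-isogenous one has $\mathrm{Hom}(E,F)=0$ and $\mathrm{End}(E)=\mathrm{End}(F)=\Z$, so $\Aut(A)=\{\pm1\}\times\{\pm1\}$; among these only $\phi=\pm\mathrm{id}$ preserves the form $dz_E\wedge dz_F$, while $(\pm1,\mp1)$ are anti-symplectic. As $-\mathrm{id}$ acts trivially on $X$, the symplectic condition forces $g$ to be induced by a pure translation, and the requirement that this translation descend to $X$ forces its vector to lie in $A[2]$. Hence $g\in A[2]$.

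In summary, the concrete content is the realisation of the two generating families of $A[2]$ as torsion sections of the projections $\pi_E$ and $\pi_F$, while the genuinely hard point is the structural statement that no \emph{extra} symplectic automorphisms appear for generic $E,F$. Once $\Aut_s(X)=A[2]$ is secured, combining it with the fibration realisation and Theorem \ref{newmainthm} yields Conjecture \ref{main} for $X=\mathrm{Km}(E\times F)$.
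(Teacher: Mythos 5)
Your reduction to Theorem \ref{newmainthm} and the first half of your argument are fine: the two projection fibrations do realize all sixteen translations coming from $A[2]$ as translations by $2$-torsion sections, so $A[2]\subseteq \mathrm{MW}_\mathrm{tor}$. The gap is the reverse inclusion. Your ``key input'' --- that every symplectic automorphism of a generic $\mathrm{Km}(E\times F)$ preserves the Kummer structure and hence comes from an affine transformation of $A$ --- is false, so the claimed equality $\Aut_s(X)=A[2]\cong(\Z/2)^4$ fails. Symplecticity only gives you trivial action on the transcendental lattice $T(X)$; it puts essentially no constraint on the action on $\mathrm{NS}(X)$, which here is a hyperbolic lattice of rank $18$, and in particular it does not force the automorphism to preserve the sixteen exceptional curves. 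Concretely, $X$ carries many elliptic fibrations besides the two projections (Oguiso classified eleven types in the non-isogenous case), and translations by $2$-torsion sections of those other fibrations are symplectic automorphisms that are not induced by any affine map of $A$. In fact $\Aut_s(X)$ is infinite: $\Aut(X)$ is infinite by Keum--Kond\=o \cite{KK}, and $\Aut_s(X)$ has finite index in $\Aut(X)$ since $\Aut(X)/\Aut_s(X)$ embeds into a finite cyclic group. A quick sanity check against the statement you would need: the paper quotes \cite{KK} as producing $28$ distinct symplectic involutions generating $\Aut_s(X)$, while a group of order $16$ contains only $15$ involutions. Note also that your parenthetical appeal to Proposition \ref{abeliansurface} cannot help here: that proposition concerns the action of torsion translations on the Chow groups of the abelian surface itself and says nothing about which automorphisms of $X$ lift to $A$.

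The paper's proof consists precisely of the citation you were missing: by \cite[Theorem 5.3 and Section 4.1]{KK}, for non-isogenous generic $E,F$ the group $\Aut_s(X)$ is generated by $28$ symplectic involutions, each of which is the translation by a $2$-torsion section of \emph{some} elliptic fibration on $X$ (not only the two projection fibrations). This gives $\mathrm{MW}_\mathrm{tor}=\Aut_s(X)$ at once, and the latter part of Theorem \ref{newmainthm} applies. So your overall strategy is the intended one, but the identification of $\Aut_s(X)$ has to come from Keum--Kond\=o's lattice-theoretic and Torelli-type analysis of the full automorphism group; there is no shortcut through rigidity of the Kummer structure, because that rigidity statement is simply not true.
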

\begin{proof}
By  \cite[Theorem 5.3 and Section 4.1]{KK}, $\Aut_s(X)$ is generated by $28$ symplectic involutions induced by translations of 2-torsion sections with respect to some elliptic fibration structure on $X$.
Thus we can apply the latter part of Theorem \ref{newmainthm}.
\end{proof}

\begin{rem}
Let $X$ be a $K3$ surface with the finite automorphism group.
Such $K3$ surfaces are classified by Nikulin \cite{Nikulin}, and their automorphism groups are determined by Kondo \cite{Kondo}.
In \cite{Kondo}, for most cases, generators of  $\Aut_s(X)$ are given by translations of elliptic fibrations.
In particular, except\footnote{In these cases, Kond\=o constructs generators of $\Aut_s(X)$ using Torelli theorem, so we do not know whether they come from $\mathrm{MW}_{\mathrm{tor}}$ or not.} when $\mathrm{NS}(X) = U\oplus E_8\oplus E_8, U\oplus A_1^{\oplus 8}, U(2) \oplus A_1^{\oplus 7}$, we have $\mathrm{MW}_{\mathrm{tor}} = \Aut_s(X)$ for a $K3$ surface $X$ with the finite automorphism group.
Therefore, Conjecture \ref{main} holds for such $K3$ surfaces.
\end{rem}

In the remaining part of this section, we will prove Theorem \ref{newmainthm}, which is a consequence of the more general result, Proposition \ref{mainthm}.
Since we use base change arguments, throughout the rest of this section, we consider elliptic surfaces which are not necessarily $K3$ surfaces.

In Section 2.1, we list some properties on higher Chow cycles we use in this section.
In Section 2.2, we prove basic results about the group $\CH^2(X,1)_{\mathrm{ind}}$ for an elliptic surface $\pi\colon X\to S$.
In Section 2.3, we define a subgroup $F(\pi)\subset \CH^2(X,1)_{\mathrm{ind}}$ consisting of cycles supported on fibers, and show that translations acts trivially on $F(\pi)$.
In Section 2.4, we prove that translation acts trivially on cycles supported on sections and fibers, modulo cycles in $F(\pi)$.
This is done by constructing explicit symbols in the Milnor $K_2$ group $K_2^{M}(\C(X))$.
In Section 2.5, we prove Proposition \ref{mainthm} and finishes the proof of Theorem \ref{newmainthm}.

\subsection{Preliminaries}

For an equi-dimensional scheme $X$ of finite type over a field $k$ and $p,q\in \Z_{\ge 0}$, let $\CH^p(X,q)$ be the higher Chow group defined by Bloch (\cite{bloch}).
An element of $\CH^p(X,q)$ is called a \textit{$(p,q)$-cycle}.

For a morphism $f\colon X\to Y$ between smooth varieties over a filed $k$, the pull-back map $f^*\colon \CH^{p}(X,q)\to \CH^p(Y,q)$ is defined and satisfies $(g\circ f)^* = f^*\circ g^*$ for $X\xrightarrow{f} Y\xrightarrow{g} Z$.
For a proper morphism $f\colon X\to Y$ between equi-dimensional scheme $X$ of finite type over a field, the push-forward map $f_*:\CH^{\dim X-d}(X,q)\to \CH^{\dim Y-d}(Y,q)$ is defined and satisfies $(g\circ f)_* = g_*\circ f_*$ for $X\xrightarrow{f} Y\xrightarrow{g} Z$.

If $X$ is smooth over $k$, we have the intersection product map
\begin{equation*}
\CH^p(X,q)\times \CH^{p'}(X,q')\to \CH^{p+p'}(X,q+q'),
\end{equation*}
which is a bilinear map.
If $f\colon X\to Y$ is the morphism between smooth varieties, $f^*$ preserves the intersection product.

For a smooth projective variety $X$ over $\C$, we have isomorphisms $\CH^1(X)\simeq \mathrm{Pic}(X)$ and $\CH^1(X,1)\simeq \C^\times$.
Thus the intersection product induces the map 
\begin{equation}\label{intersectionproduct}
\mathrm{Pic}(X)\otimes\C^\times =\CH^1(X)\otimes_\Z \CH^1(X,1) \longrightarrow \mathrm{CH}^2(X,1).
\end{equation}
The image of this map is called the \textit{decomposable part} of $\mathrm{CH}^2(X,1)$ and is denoted by $\CH^2(X,1)_\dec$.
A \textit{decomposable cycle} is an element of $\CH^2(X,1)_\dec$.
The quotient $\CH^2(X,1)/\CH^2(X,1)_\dec$ is called the \textit{indecomposable part} of $\CH^2(X,1)$ and is denoted by $\mathrm{CH}^2(X,1)_{\mathrm{ind}}$. 
For a $(2,1)$-cycle $\xi$, $\xi_\ind$ denotes its image in $\mathrm{CH}^2(X,1)_{\mathrm{ind}}$.

Since $f^*$ preserves the intersection product, if $f\colon X\to Y$ is a morphism between smooth projective varieties over $\C$, the pull-back map $f^*\colon \CH^2(Y,1)\to \CH^2(X,1)$ induces the map 
\begin{equation*}
f^*\colon \CH^2(Y,1)_\ind \to \CH^2(X,1)_\ind.
\end{equation*}

For a surjective morphism $f\colon X\to Y$ between smooth projective varieties over $\C$ of same dimensions,
the following projection formula holds.
\begin{equation}\label{projectionformula}
f_*(\alpha\cdot (f^*\beta)) = (f_*\alpha)\cdot \beta\quad (\alpha\in \CH^p(X,q),\beta\in \CH^{p'}(X,q'))
\end{equation}
In particular, if we put $\alpha\in \CH^1(X)$ and $\beta\in \C^\times =\CH^1(Y,1)$ in \eqref{projectionformula}, we have $f_*(\CH^2(X,1)_{\dec}) \subset \CH^2(Y,1)_{\dec}$.
Thus, $f_*\colon \CH^2(X,1)\to\CH^2(Y,1)$ induces
\begin{equation}\label{indpushforward}
f_*\colon \CH^2(X,1)_\ind\to\CH^2(Y,1)_\ind.
\end{equation}

Moreover, if we set the degree of $f$ by $d=[\C(X):\C(Y)]$, by putting $\alpha = [X] \in \CH^0(X)$ in \eqref{projectionformula}, we have 
\begin{equation}\label{multd}
f_*f^*\beta = d\cdot \beta\quad (\beta\in \CH^p(X,q)).
\end{equation}

In this paper, we identify higher Chow groups as a homology group of Gersten complexes.
We use the following two cases.
For the proof for $(p,q) =(2,1)$, see, e.g., \cite{MS2} Corollary 5.3.
The case $(p,q)=(1,1)$ can be proved similarly.
In the following, for a equi-dimensional schemes $X$ of finite type over a field, and $r\in \Z_{\ge 0}$, $X^{(r)}$ denotes the set of all irreducible closed subsets of $X$ of codimension $r$.

For a smooth variety over a field $k$, the higher Chow group $\CH^2(X,1)$ is isomorphic to the homology group of the following complex.
\begin{equation*}
K_2^{\mathrm{M}} (k(X)) \xrightarrow{T} \displaystyle\bigoplus_{C\in X^{(1)}}k(C)^\times \xrightarrow{\mathrm{div}}\displaystyle\bigoplus_{p\in X^{(2)}}\Z\cdot p 
\end{equation*}
where the map $T$ denotes the tame symbol map from the Milnor $K_2$-group of the function field $k(X)$.
If $\dim \:X$ is less than 2, we regard the last term as $0$.
Note that $k(C)^\times$ coincides with the residue field of the generic point of $C$.

By this description,  each $(2,1)$-cycle is represented by a formal sum
\begin{equation}\label{formalsum}
\sum_j (C_j, \varphi_j)\in \displaystyle\bigoplus_{C\in X^{(1)}}k(C)^\times
\end{equation}
where $C_j$ are prime divisors on $X$ and $f_j\in k(C_j)^\times$ are non-zero rational functions on them such that $\sum_j {\mathrm{div}}_{C_j}(f_j) = 0$ as codimension 2 cycles on $X$.

Using the expression \eqref{formalsum}, the tame symbol map is given by the following formula.
\begin{equation*}
T(\{\varphi,\psi\}) = \sum_{C}(-1)^{\mathrm{ord}_C(\varphi)\mathrm{ord}_C(\psi)}\left(C,\varphi^{\mathrm{ord}_C(\psi)}\psi^{-\mathrm{ord}_C(\varphi)}\Big|_C\right)\quad (\varphi,\psi \in k(X)^\times)
\end{equation*}
where $\mathrm{ord}_C\colon k(X)^\times \to \Z$ denotes the order function along $C$.

For a smooth projective variety $X$ over $\C$, let $C$ be a prime divisor on $X$, $\alpha\in \C^\times = \CH^1(X,1)$ and $[C]\in \mathrm{Pic}(X)=\CH^1(X)$ be the class corresponding to $C$. 
Then, the intersection product $[C]\cdot \alpha\in \CH^2(X,1)$ is represented by $(C,\alpha)$ in the presentation (\ref{formalsum}). 

For an equi-dimensional scheme $X$ of finite type over $\C$,  the higher Chow group $\CH^1(X,1)$ is isomorphic to the kernel of the following map.
\begin{equation*}
\displaystyle\bigoplus_{C\in X^{(0)}}\C(C)^\times \xrightarrow{\mathrm{div}}\displaystyle\bigoplus_{p\in X^{(1)}}\Z\cdot p
\end{equation*}
We have the similar expression as \eqref{formalsum} for cycles in $\CH^1(X,1)$.

\subsection{Higher Chow cycles and elliptic fibration}
Hereafter we consider elliptic surfaces.
We use the following notatations.
\begin{enumerate}
\item $\pi\colon X\to S$ is a surjective morphism with connected fibers.
\item $X$ and $S$ are smooth projective varieties over $\C$ of dimension $2$ and $1$, respectively.
\item $z\colon S\to X$ is a section of $\pi$.
\item For a general closed point $s\in S$, the fiber $X_s=\pi^{-1}(s)$ is an elliptic curve with a unit $z(s)\in X_s$.
\end{enumerate}
Furthermore, we sometimes assume the following condition.
\begin{enumerate}
\setcounter{enumi}{4}
\item Each fiber $X_s$ does not contain $(-1)$-curves.
\end{enumerate}
If the condition (5) holds, $\pi$ is called \textit{relatively minimal}.

For a closed curve $C\subset X$, $C$ is called \textit{vertical} if $\pi(C)$ is a point, and \textit{horizontal} if $\pi(C)=S$.
Furthermore, if the restriction $\pi|_C\colon C\to S$ is isomorphism, $C$ is called a \textit{section}.
The image $z(S)$ of the section $z\colon S\to X$ is called the \textit{zero section}, and denoted by $Z$.
For an element in $\widetilde{\xi} \in \bigoplus_{C\in X^{(1)}}\C(C)^\times$, we have the canonical decomposition $\widetilde{\xi}=\widetilde{\xi}_h + \widetilde{\xi}_v$ such that $\widetilde{\xi}_h$ (resp. $\widetilde{\xi}_v$) is supported on horizontal (resp. vertical) curves. 

Let $\eta$ be the generic point of $S$.
Then $X_\eta =\pi^{-1}(\eta)$ is an elliptic curve over $\C(S)=\kappa(\eta)$ with the unit $Z_\eta$.
The following $1:1$ correspondence is crucial.
\begin{equation}\label{11corresp}
\{\text{horizontal curves on $X$}\} \longleftrightarrow \{\text{codimension 1 points on $X_\eta$}\}
\end{equation}
where the correspondence from left to right is given by $C\mapsto C_\eta$, and the inverse is given by taking the closure.
If a horizontal curve $C$ on $X$ corresponds to a codimension 1 point $p$ on $X_\eta$ by \eqref{11corresp}, 
the rational function field $\C(C)$ is canonically isomorphic to the residue field $\kappa(p)$.
Furthermore, the above correspondence induces a bijection
\begin{equation*}
\{\text{sections on $X$}\} \longleftrightarrow \{\text{$\C(S)$-rational points on $X_\eta$}\}
\end{equation*}
between subsets.

We have the commutative diagram
\begin{equation}\label{bigdiagram}
\begin{tikzcd}
0 \arrow[r]& 0 \arrow[r]\arrow[d] & K_2^M(\C(X)) \arrow[r,equal] \arrow[d,"T"]&  K_2^M(\C(X))  \arrow[r]\arrow[d,"T"] & 0 \\
0\arrow[r] & \displaystyle\bigoplus_{s\in S}\bigoplus_{C\in X_s^{(0)}}\C(C)^\times \arrow[r,"(*)"] \arrow[d,"\div"]&  \displaystyle\bigoplus_{C\in X^{(1)}}\C(C)^\times\arrow[r,"(**)"] \arrow[d,"\div"]&  \displaystyle\bigoplus_{p\in X_\eta^{(1)}}\kappa(p)^\times \arrow[r]\arrow[d] & 0\\
0\arrow[r] &  \displaystyle\bigoplus_{s\in S}\bigoplus_{p\in X_s^{(1)}}\Z\cdot p \arrow[r,equal] & \displaystyle\bigoplus_{p\in X^{(2)}}\Z \cdot p \arrow[r] & 0 \arrow[r] & 0
\end{tikzcd}
\end{equation}
where the vertical columns are Gersten complexes.
The map $(*)$ is a natural inclusion by regarding irreducible components of $X_s$ as prime divisors on $X$, and the map $(**)$ is a natural projection induced by the $1:1$ correspondence \eqref{11corresp}.
In particular, the horizontal rows are exact sequences.
This diagram induces the  following exact sequence.
\begin{equation}\label{localexact}
\bigoplus_{s\in S^{(1)}}\CH^1(X_s,1) \xrightarrow{i_*} \CH^2(X,1) \xrightarrow{j^*} \CH^2(X_\eta,1)
\end{equation}
This exact sequence coincides with the one induced by a localization sequence of higher Chow groups.
By a diagram chasing in \eqref{bigdiagram}, we can prove the following lemma.

\begin{lem}\label{verticalsectionlemma}
Let $\xi$ be a $(2,1)$-cycle on $X$.
\begin{enumerate}
\item If $\xi$ is represented by a cycle $\widetilde{\xi}\in \bigoplus_{C\in X^{(1)}}\C(C)^\times$ such that $\widetilde{\xi}_h=0$, then $\xi$ is in the image of $i_*$ in \eqref{localexact}.
\item Suppose that $j^*(\xi)\in \CH^2(X_\eta,1)$ is represented by a cycle in $\bigoplus_{p\in X_\eta^{(1)}}\kappa(p)^\times$ supported on $\C(S)$-rational points.
Then, $\xi$ is represented by a cycle $\widetilde{\xi}\in \bigoplus_{C\in X^{(1)}}\C(C)^\times$ such that the support of $\widetilde{\xi}_h$ is contained in sections of $X$.
\end{enumerate}
When the conclusion in $(2)$ holds, $\xi$ is called a section type.
\end{lem}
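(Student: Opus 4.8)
The plan is to deduce both statements by diagram-chasing in \eqref{bigdiagram}, using that its three vertical columns are Gersten complexes and that the exactness of the rows makes the diagram a short exact sequence of complexes, whose resulting long exact sequence in homology is exactly \eqref{localexact}. I would first record which homology groups appear at the middle height: the central column computes $\CH^2(X,1)$, the left column computes $\bigoplus_{s}\CH^1(X_s,1)=\Ker(\div)$, and the right column computes $\CH^2(X_\eta,1)=\mathrm{coker}(T)$. Two features of the horizontal maps drive the argument: the lower-left horizontal arrow is an \emph{equality} $\bigoplus_{s}\bigoplus_{p\in X_s^{(1)}}\Z\cdot p=\bigoplus_{p\in X^{(2)}}\Z\cdot p$, since every closed point of $X$ is a codimension-$1$ point of the unique fiber containing it; and, via \eqref{11corresp}, the projection $(**)$ restricts isomorphically on the horizontal part $\bigoplus_{C\text{ horizontal}}\C(C)^\times\xrightarrow{\sim}\bigoplus_{p\in X_\eta^{(1)}}\kappa(p)^\times$ while annihilating the vertical part.

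For part (1), the hypothesis $\widetilde\xi_h=0$ says $\widetilde\xi$ is supported on vertical curves, so it lies in the image of the inclusion $(*)$; write $\widetilde\xi=(*)(\widetilde\xi')$. Since $\widetilde\xi$ represents a $(2,1)$-cycle we have $\div(\widetilde\xi)=0$, and commutativity of the lower-left square together with the fact that the bottom-left arrow is an isomorphism forces $\div(\widetilde\xi')=0$ in the left column. Hence $\widetilde\xi'$ determines a class in $\Ker(\div)=\bigoplus_{s}\CH^1(X_s,1)$, and by construction $i_*$ sends it to $\xi$, which is the claim.

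For part (2), I would begin with an arbitrary representative $\widetilde\xi$ of $\xi$. The induced map $j^*$ in \eqref{localexact} is the one coming from $(**)$, so $(**)(\widetilde\xi)$ represents $j^*(\xi)$ in $\CH^2(X_\eta,1)=\mathrm{coker}(T)$. By hypothesis this class is also represented by some $\omega\in\bigoplus_{p\in X_\eta^{(1)}}\kappa(p)^\times$ supported on $\C(S)$-rational points, so the two representatives differ by a boundary: $(**)(\widetilde\xi)-\omega=T(u)$ for some $u\in K_2^M(\C(X))$, where $T$ is the tame symbol of the right column. I would then replace $\widetilde\xi$ by $\widetilde\xi':=\widetilde\xi-T(u)$, now using the \emph{middle-column} tame symbol. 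Because $\div\circ T=0$, the element $\widetilde\xi'$ is still a cycle, and subtracting a boundary does not change its class $\xi$; commutativity of the upper-right square (which identifies the two tame symbols through $(**)$) gives $(**)(\widetilde\xi')=\omega$. Finally, since $(**)$ kills the vertical part and is a bijection on the horizontal part matching horizontal curves with codimension-$1$ points of $X_\eta$, and since this bijection restricts to one between sections and $\C(S)$-rational points, the fact that $\omega$ is supported on $\C(S)$-rational points translates into $\widetilde\xi'_h$ being supported on sections. Thus $\widetilde\xi'$ is the desired representative.

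All of this is formal once \eqref{bigdiagram} is granted, so within the lemma the chase itself carries little risk; the genuine content lies in the construction and exactness of that diagram. The one point I would be careful about is the interplay of the two tame symbols in part (2): the replacement $\widetilde\xi-T(u)$ must be performed with the middle-column map, and its image under $(**)$ equals $T(u)$ for the right-column map only because the upper-right square commutes. I would verify this commutativity and the identity $\div\circ T=0$ explicitly, as these are precisely what make the modification a legitimate cycle representative realizing the prescribed generic-fiber behaviour.
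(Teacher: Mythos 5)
Your proof is correct and follows exactly the route the paper intends: the paper itself only says the lemma follows ``by a diagram chasing in \eqref{bigdiagram}'', and your argument is a faithful execution of that chase, using the exact rows, the identification of the column homologies with $\bigoplus_s\CH^1(X_s,1)$, $\CH^2(X,1)$, and $\CH^2(X_\eta,1)$, and the compatibility of the two tame symbols through $(**)$. The points you flag for care (commutativity of the upper-right square and $\div\circ T=0$) are indeed the only substantive inputs, and both hold by the construction of the Gersten complexes.
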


For a section $D$ of $\pi\colon X\to S$, the translation by $D_\eta$ induces the isomorphism $X_\eta\to X_\eta$ on the elliptic curve.
Consider the following condition.
\begin{equation*}
(\star)\quad \text{There exists a $\rho_D\in \Aut(X)$ such that $(\rho_D)_\eta$ is the translation by $D_\eta$.}
\end{equation*}
Note that $\rho_D$ is unique if it exists.
If $\pi$ is relatively minimal, $X$ is the Kodaira-N\'eron model of $X_\eta$, so $(\star)$ holds for any section.

\subsection{Cycles supported on fibers}

\begin{defn}
We define the subgroup $F(\pi)$ of $\CH^2(X,1)_{\ind}$ by the image of 
\begin{equation*}
\bigoplus_{s\in S^{(1)}}\CH^1(X_s,1) \xrightarrow{i_*} \CH^2(X,1) \to \CH^2(X,1)_{\ind}.
\end{equation*}
If $\pi\colon X\to S$ and $\pi \colon X'\to S'$ be elliptic fibrations, and $f\colon X'\to X, g\colon S' \to S$ are morphisms such that the diagram 
\begin{equation}\label{comm} 
\begin{tikzcd}
X\arrow[d,"\pi"'] & \arrow[l,"f"'] X' \arrow[d,"\pi'"] \\
S & \arrow[l,"g"] S'
\end{tikzcd}
\end{equation}
commutes, then we have $f_*(F(\pi')) \subset F(\pi)$ under the push-forward map in \eqref{indpushforward}.
\end{defn}

For a point $s\in S^{(1)}$, we have a subgroup
\begin{equation}\label{fiberdecomp}
\bigoplus_{C\in X_s^{(0)}}\C^\times \subset \mathrm{Ker}\left(\displaystyle\bigoplus_{C\in X_s^{(0)}}\C(C)^\times \xrightarrow{\mathrm{div}}\displaystyle\bigoplus_{p\in X_s^{(1)}}\Z\cdot p\right) = \CH^1(X_s,1).
\end{equation}
Let $I_s$ be the quotient of $\CH^1(X_s,1)$ by this subgroup.
Since the image of this subgroup by $\CH^1(X_s,1)\to \CH^2(X,1)$ is contained in the decomposable part, we have the surjective map
\begin{equation*}
\bigoplus_{s\in S^{(1)}}I_s \twoheadrightarrow F(\pi).
\end{equation*}
We will describe the group $I_s$ when $\pi\colon X\to S$ is relatively minimal.
A singular fiber $X_s$ is called \textit{multiplicative type} if the following cases occur. 
(See Figure \ref{singularfiberfig}.)
\begin{enumerate}
\item[$(\mathrm{I}_1)$] The fiber $X_s$ is a rational curve with a node.
\item[$(\mathrm{I}_2)$] The irreducible component of the fiber $X_s$ is $\Theta_0$ and $\Theta_1$ which are both isomorphic to $\P^1$ and intersect transversally at 2 points.
\item[$(\mathrm{I}_m)$] $(m\ge 3)$ The irreducible component of the fiber $X_s$ is $\Theta_0,\Theta_1,\ldots, \Theta_{m-1}$ which are all isomorphic to $\P^1$. 
We have $\Theta_0\cdot \Theta_1 = \Theta_1\cdot \Theta_2=\cdots =\Theta_{m-2}\cdot \Theta_{m-1} = \Theta_{m-1}\cdot \Theta_0=1$ and otherwise $\Theta_i\cdot \Theta_j=0$.
\end{enumerate}

\begin{figure}
\includegraphics[width = 130mm]{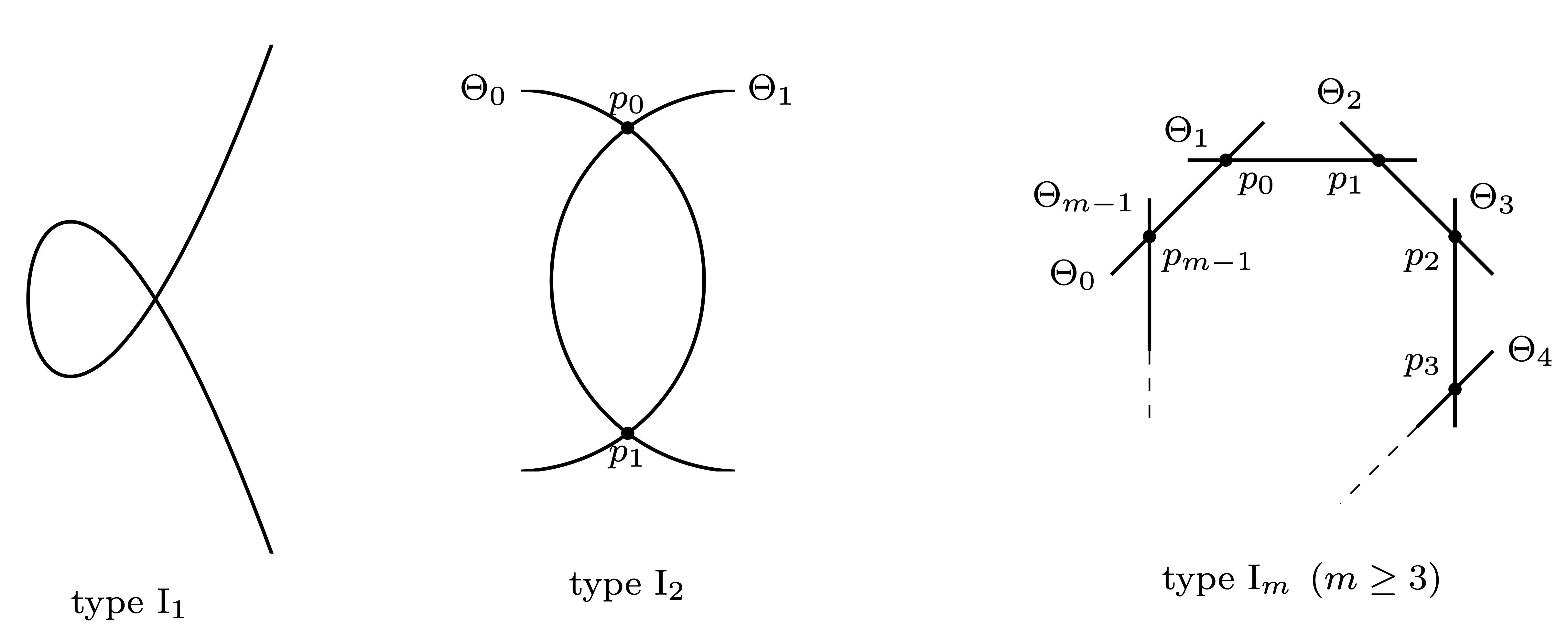}
\caption{The multiplicative singular fibers}
\label{singularfiberfig}
\end{figure}

Then we have the following.

\begin{prop}\label{generator}
Let $\pi\colon X\to S$ be a relatively minimal elliptic fibration.
For a closed point $s\in S$, we have
\begin{equation*}
I_s = \begin{cases}
\Z & (X_s\text{ is a multiplicative singular fiber.})\\
0 & (\text{otherwise})
\end{cases}
\end{equation*}
In particular, the rank of $F(\pi)$ is bounded by the number of multiplicative singular fibers.
\end{prop}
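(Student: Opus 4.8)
The plan is to compute the group $I_s$, the quotient of $\CH^1(X_s,1)$ by the subgroup $\bigoplus_{C\in X_s^{(0)}}\C^\times$ of ``constant'' symbols, directly from the Gersten description in \eqref{fiberdecomp}. Write $X_s = \sum_i \Theta_i$ for the irreducible components (the $\Theta_i$ being $\P^1$'s for the singular fibers and a smooth or nodal elliptic curve otherwise). An element of $\CH^1(X_s,1)$ is a tuple $(\varphi_i)$ with $\varphi_i\in \C(\Theta_i)^\times$ satisfying $\sum_i \div_{\Theta_i}(\varphi_i)=0$ as a $0$-cycle on $X_s$; the divisor condition is imposed at \emph{all} points of $X_s^{(1)}$, in particular at the nodes where two components meet. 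Quotienting by the constants, $I_s$ is generated by classes of such tuples modulo those $\varphi_i$ that are constant on their component. The first step is therefore to make this divisor-matching condition explicit component by component.

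The key observation is to organize the $\varphi_i$ by their divisors on each $\P^1$-component. Since each $\Theta_i\simeq \P^1$ and a rational function on $\P^1$ is determined up to scalar by its divisor, I would send $(\varphi_i)_i$ to the collection of divisors $(\div_{\Theta_i}\varphi_i)_i$; this kills exactly the constant part, so $I_s$ injects into the group of tuples of degree-$0$ divisors on the components that are \emph{compatible} at every intersection point (the orders from the two components meeting at a node must cancel in the total $0$-cycle). I expect this to identify $I_s$ with the first homology of a combinatorial complex attached to the dual graph of $X_s$: degree-$0$ divisor data on vertices subject to cancellation conditions at edges. Concretely, for a smooth or additive fiber (types other than $\mathrm I_m$) the dual graph is a tree (or the fiber is irreducible of arithmetic genus one), and the compatibility constraints force every divisor to vanish, giving $I_s=0$; for a multiplicative fiber $\mathrm I_m$ the dual graph is an $m$-cycle, and the single independent cycle in the graph produces one free $\Z$, giving $I_s=\Z$.

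I would treat the cases following Kodaira's classification \cite{Kod}, split as follows. \emph{Smooth fiber or type $\mathrm I_0^*,\,\mathrm{II},\mathrm{III},\mathrm{IV},\ldots$ (additive/potentially good reduction):} the components form a tree of $\P^1$'s (or a single nodal/cuspidal rational curve for $\mathrm I_1,\mathrm{II}$), so no nontrivial cycle exists and a divisor-counting argument shows the only compatible degree-$0$ tuple is $0$ modulo constants, hence $I_s=0$. \emph{Type $\mathrm I_m$, $m\ge 1$:} Here I make the generator explicit. For $\mathrm I_m$ with $m\ge 2$ the components $\Theta_0,\dots,\Theta_{m-1}$ form a cyclic chain; choosing on each $\Theta_j\simeq\P^1$ the two intersection points $P_j=\Theta_{j-1}\cap\Theta_j$ and $Q_j=\Theta_j\cap\Theta_{j+1}$, the function $\varphi_j$ on $\Theta_j$ with divisor $Q_j-P_j$ yields a tuple whose total divisor telescopes to $0$ around the cycle, and this class generates $I_s\simeq\Z$; for $\mathrm I_1$ one uses a coordinate on the normalization $\P^1\to X_s$ taking prescribed values at the two preimages of the node. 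The main obstacle will be the bookkeeping that the map $I_s\to \CH^2(X,1)$ is injective onto a $\Z$ (i.e.\ that the generator is not itself decomposable once pushed into $X$), which I would settle by exhibiting the divisor-class obstruction or by a local regulator/linking computation; after that, the bound on $\rank F(\pi)$ follows immediately from the surjection $\bigoplus_s I_s\twoheadrightarrow F(\pi)$ since only multiplicative fibers contribute.
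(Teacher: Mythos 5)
Your computation of $I_s$ is correct and is essentially the paper's own argument: both pass from a tuple $(\varphi_i)$ to the divisor data $(\div_{\Theta_i}\varphi_i)$ (killing exactly the constants, since a rational function on $\P^1$ is determined by its divisor up to scalar), and then run the Kodaira case analysis --- a leaf-peeling/tree argument forcing everything constant for additive fibers, versus the telescoping generator $\div_{\Theta_i}(\psi_i)=p_i-p_{i-1}$ around the cycle for type $\mathrm{I}_m$. Your dual-graph-homology packaging is a clean way to organize this and makes torsion-freeness of $I_s$ automatic.

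One correction, though: the ``main obstacle'' you flag at the end is not an obstacle at all, and pursuing it would lead you astray. The proposition only asserts an \emph{upper} bound $\rank F(\pi)\le \#\{\text{multiplicative fibers}\}$, and this follows immediately from the surjection $\bigoplus_s I_s\twoheadrightarrow F(\pi)$ together with $I_s\simeq\Z$ or $0$: the rank of a quotient is at most the rank of the source. Injectivity of $I_s\to \CH^2(X,1)_\ind$ (equivalently, that the generator $\xi_m$ survives as an indecomposable class on $X$) would only be needed for the reverse inequality, which is not claimed --- and indeed may fail, since the image of $\xi_m$ can die in $\CH^2(X,1)_\ind$; settling when it does is a genuinely hard question that neither the proposition nor the paper's proof addresses.
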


\begin{proof}
Let $\Theta_0,\Theta_1,\dots, \Theta_{m-1}$ be irreducible components of the fiber $X_s$.
Then cycles in $\xi\in \CH^1(X_s,1)$ can be represented by
\begin{equation}\label{repn}
\xi = (\Theta_0, \varphi_0)+(\Theta_1,\varphi_1)+\cdots +(\Theta_{m-1},\varphi_{m-1})
\end{equation}
where $\varphi_i\in \C(\Theta_i) ^\times$ are rational functions satisfying $\sum_{i=0}^{m-1}\div_{\Theta_i}(\varphi_i)=0$.

First, assume that $X_s$ is not multiplicative singular fiber and $\xi \in \CH^1(X_s,1)$.
We may assume $\xi\in\CH^1(X,1)$ is represented as in \eqref{repn}. 

If $X_s$ is smooth fiber, we have $\CH^1(X_s,1)=\C^\times$, so $I_s=0$.

If $X_s$ is of type II in the classification by Kodaira \cite[Theorem 6.2]{Kod}, $X_s$ is a rational curve with a cusp.
Let $\widetilde{X}_s\to X_s$ be the normalization and $p\in \widetilde{X}_s$ be a point on the cusp.
If $\xi=(X_s,\varphi)$ satisfies $\div_{X_s}(\varphi)=0$, we have $\div_{\widetilde{X_s}}(\varphi)=0$.
Then $\varphi\in \C^\times$, so $\xi=0$ in $I_s$. 

If $X_s$ is other type of additive singular fibers, all irreducible components are isomorphic to $\P^1$, and there exists an irreducible component $\Theta_{i_0}$ which intersects with the other components only at a single point $p_{0}$.
Since $\sum_{i}\div_{\Theta_i}(\varphi_i)=0$, the support of $\div_{\Theta_{i_0}}(\varphi_{i_0})$ is contained in $\{p_0\}$.
Then we have $\div_{\Theta_{i_0}}(\varphi_{i_0})=0$, and this implies $\varphi_{i_0}$ is constant, i.e., $\varphi_{i_0}\in \C^\times$.
Next, we can find an irreducible component $\Theta_{i_1}$ which intersects with the other components except $\Theta_{i_0}$ only at a single point $p_1$.
Since  $\sum_{i\neq i_0}\div_{\Theta_i}(\varphi_i)=0$, the support of $\div_{\Theta_{i_1}}(\varphi_{i_1})$ is contained in $\{p_1\}$.
Then we have $\div_{\Theta_{i_1}}(\varphi_{i_1})=0$, and this implies $\varphi_{i_1}$ is constant.
Continuing the same arguments, we can show that all rational functions appearing in \eqref{repn} is constant.
Thus $\xi$ is in the subgroup of  \eqref{fiberdecomp}. 
This implies $I_s=0$.

Secondly, assume that $X_s$ is of type $\mathrm{I}_1$.
Let $\widetilde{X}_s\to X_s$ be the normalization and $p_0,p_\infty\in \widetilde{X}_s$ be the points above the node.
Since $\widetilde{X}_s\simeq \P^1$, we can find a rational function $\psi\in \C(\widetilde{X}_s)^\times(=\C(X_s)^\times)$ such that $\div_{\widetilde{X}_s}(\psi) = p_0-p_\infty$.
Note that $\psi$ is determined up to constant multiplication by this relation.
Then $\psi$ satisfies $\div_{X_s}(\psi)=0$, so $(X_s,\psi)$ defines a nonzero element $\xi_1\in I_s$.
Clearly, it is non-torsion.
Let $\xi = (X_s,\varphi)$ be another $(1,1)$-cycle on $X_s$. 
Since $\div_{X_s}(\varphi) = 0$ on $X_s$,  the suppor of $\div_{\widetilde{X}_s}(\varphi)$ is contained in $\{p_0,p_\infty\}$, so we have $\div_{\widetilde{X}_s}(\varphi)= n\cdot p_0-n\cdot p_\infty$ for some $n\in \Z$.
This implies $\div_{\widetilde{X}_s}(\psi^{n})=\div_{\widetilde{X}_s}(\varphi)$, so $\varphi$ equals $\psi^n$ times a contant.
Thus we have $\xi=n\cdot \xi_1$ in $I_s$ and $I_s = \Z\cdot \xi_1$.

Finally, assume that $X_s$ is of type $\mathrm{I}_m\:\:(m\ge 2)$.
Hereafter we consider all index $i$ as elements of $\Z/m\Z$, e.g., we identify $i=0$ and $i=m$.
For $m=2$, let $p_0$ and $p_1$ be the intersection points of $\Theta_0$ and $\Theta_1$.
For $m\ge 3$, we label the intersection points $p_0,p_1,\ldots, p_{m-1}$ of irreducible components so that $\Theta_i\cap \Theta_{i+1} = \{p_i\}$.
For $i\in \Z/m\Z$, let $\psi_i$ be a rational function on $\Theta_i\simeq \P^1$ such that $\div_{\Theta_i}(\psi_i) = p_i-p_{i-1}$.
Such a rational function is uniquely determined up to constant multiplication and satisfies $\sum_{i}\div_{\Theta_i}(\psi_i)=0$.
Thus, these rational functions define a non-torsion element $\xi_m\in I_s$.
Let $\xi\in \CH^1(X,1)$ be another $(1,1)$-cycle, represented as in \eqref{repn}.
Since we have $\sum_{i}\div_{\Theta_i}(\varphi_i)=0$, the support of $\div_{\Theta_i}(\varphi_i)$ is contained in $\{p_i,p_{i-1}\}$.
Then there exists $n_i\in \Z$ such that $\div_{\Theta_i}(\varphi_i)=n_i\cdot \div_{\Theta_i}(\psi_i)$, so $\varphi_i$ is $\psi^{n_i}$ times a constant.
Since we have 
\begin{equation*}
0=\sum_{i=0}^{m-1}\div_{\Theta_i}(\varphi_i) = \sum_{i=0}^{m-1}n_i\cdot (p_i-p_{i-1}) = \sum_{i=0}^{m-1}(n_i-n_{i+1})p_i,
\end{equation*}
this implies $n_0=n_1=\cdots =n_{m-1}$.
Thus, if we put $n=n_0=n_1=\cdots =n_{m-1}$, we have $\xi = n\cdot \xi_m$ in $I_s$.
Thus we have $I_s = \Z\cdot \xi_m$.
\end{proof}

Assume that $\pi$ is relatively minimal.
For a section $D$, the translation by $D_\eta$ on $X_\eta$ always induces $\rho_D\in\Aut(X)$ since the condition $(\star)$ in the end of Section 2.2 is always satisfied.
Then we have the following.

\begin{prop}\label{singularfiberprop}
The pull-back map $\rho_D^*\colon \CH^2(X,1)_\ind \to \CH^2(X,1)_\ind$ preserves the subgroup $F(\pi)$.
Furthermore, $\rho_D^*\colon F(\pi)\to F(\pi)$ is the identity map.
\end{prop}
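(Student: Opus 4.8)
The plan is to reduce the whole statement to a fiberwise computation and then to carry out that computation on each multiplicative fiber using the explicit generators produced in Proposition \ref{generator}.

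First I would record that $\rho_D$ is an automorphism over $S$: since $(\rho_D)_\eta$ is the translation by $D_\eta$, it is an automorphism of $X_\eta$ over $\kappa(\eta)=\C(S)$, so $\pi\circ\rho_D=\pi$ at the generic point and hence everywhere. Consequently $\rho_D$ maps each fiber $X_s$ into itself, and I set $\rho_{D,s}:=\rho_D|_{X_s}\in\Aut(X_s)$. Because $\rho_D$ is an automorphism we have $\rho_D^*=(\rho_D^{-1})_*$, and from the commuting square $\rho_D^{-1}\circ i_s=i_s\circ\rho_{D,s}^{-1}$ (where $i_s\colon X_s\hookrightarrow X$) together with functoriality of push-forward I obtain the key identity
\[
\rho_D^*\circ i_* = i_*\circ \rho_{D,s}^* \qquad\text{on } \CH^1(X_s,1).
\]
Since $\rho_{D,s}^*$ visibly carries the constants subgroup $\bigoplus_{C\in X_s^{(0)}}\C^\times$ of \eqref{fiberdecomp} into itself, it descends to $I_s$, and the displayed identity shows at once both that $\rho_D^*$ preserves $F(\pi)$ and that, on $F(\pi)$, it is induced by the collection of maps $\rho_{D,s}^*\colon I_s\to I_s$. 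This settles the first assertion and reduces the second to proving that $\rho_{D,s}^*$ is the identity on each $I_s$.

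By Proposition \ref{generator} only the multiplicative fibers contribute, with $I_s=\Z\cdot\xi_m$. The heart of the argument is to understand how the translation $\rho_{D,s}$ acts on a fiber of type $\mathrm{I}_m$. Using the group structure $X_s^{\mathrm{sm}}\cong \mathbb{G}_m\times\Z/m\Z$ on the smooth locus of the Kodaira-N\'eron model, translation by $D$ adds a fixed element $k\in\Z/m\Z$ to the component index, namely the index of the component met by $D$. Thus $\rho_{D,s}(\Theta_i)=\Theta_{i+k}$ and, since the translation preserves the cyclic incidence pattern, it carries the intersection point $p_i=\Theta_i\cap\Theta_{i+1}$ to $p_{i+k}$. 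For $\mathrm{I}_1$ the component group is trivial, so $\rho_{D,s}$ fixes the unique component and acts as $t\mapsto\lambda t$ on its normalization $\P^1$, fixing the two points over the node.

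With this description the computation is immediate. For $m\ge 2$, writing $\rho_{D,s}^*\xi_m=\sum_j(\Theta_j,\rho_{D,s}^*\psi_{j+k})$ and comparing divisors gives
\[
\div_{\Theta_j}(\rho_{D,s}^*\psi_{j+k}) = p_j-p_{j-1} = \div_{\Theta_j}(\psi_j),
\]
so $\rho_{D,s}^*\psi_{j+k}=c_j\psi_j$ for a constant $c_j\in\C^\times$; hence $\rho_{D,s}^*\xi_m$ differs from $\xi_m$ only by the constant cycle $\sum_j(\Theta_j,c_j)$, which vanishes in $I_s$. The case $\mathrm{I}_1$ is the same computation with $\psi$ replaced by $\lambda\psi$. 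I expect the main obstacle to be precisely the geometric input of the preceding paragraph: pinning down that a translation (as opposed to an inversion, which would reverse the cyclic order of the $\Theta_i$ and send $\xi_m$ to $-\xi_m$) shifts the components in an orientation-preserving way, so that $\rho_{D,s}^*\xi_m=+\xi_m$. Everything else is formal or short divisor bookkeeping.
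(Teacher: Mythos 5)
Your reduction to the fiberwise groups $I_s$ and your computation for fibers of type $\mathrm{I}_m$ with $m\ge 3$ are correct and coincide with the paper's own argument: there, $\rho_{D,s}(\Theta_i)=\Theta_{i+k}$ forces $\rho_{D,s}(p_i)\in\rho_{D,s}(\Theta_i)\cap\rho_{D,s}(\Theta_{i+1})=\Theta_{i+k}\cap\Theta_{i+k+1}=\{p_{i+k}\}$, and the divisor bookkeeping then gives $\rho_{D,s}^*\xi_m=\xi_m$ in $I_s$. Your treatment of $\mathrm{I}_1$ (lift to the normalization $\P^1$, where the translation extends $t\mapsto\lambda t$ and therefore fixes the two preimages of the node) is also complete, and is in fact more direct than what the paper does for that case.

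The gap is the case $\mathrm{I}_2$, and it is exactly the point you flag as ``the main obstacle'' but then assume rather than prove. For $m=2$ the two components meet in \emph{two} points, so $\rho_{D,s}(p_i)\in\Theta_{i+k}\cap\Theta_{i+k+1}=\{p_0,p_1\}$ gives no information: knowing the action on components does not determine the action on the nodes, and ``preserving the cyclic incidence pattern'' is vacuous for a $2$-gon. The danger is real: the restriction of the elliptic involution preserves both components of an $\mathrm{I}_2$ fiber (since $-i\equiv i$ in $\Z/2\Z$) and swaps the two nodes, hence sends $\xi_2\mapsto -\xi_2$; so an incidence-preserving automorphism of the fiber can perfectly well act by $-1$ on $I_s$, and one must genuinely use that $\rho_{D,s}$ is a translation. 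A direct proof needs the orientation structure of the N\'eron $2$-gon: translations commute with the $\mathbb{G}_m$-action of the identity component, hence carry each $\mathbb{G}_m$-fixed point to one of the same weight, and the gluing matches the weight-$(+1)$ end of $\Theta_i$ with the weight-$(-1)$ end of $\Theta_{i+1}$ cyclically. That last statement is structure theory of the special fiber (Tate curve, resp.\ Deligne--Rapoport), not something that follows from the input you state; note also that only the subcase $k=1$ is problematic, since for $k=0$ the torsor argument you used for $\mathrm{I}_1$ shows every node is fixed. The paper sidesteps all of this: it takes a triple cover $S'\to S$ totally ramified at $s$, resolves $X\times_S S'$ so that the fiber becomes type $\mathrm{I}_{3m}$ with $3m\ge 3$, applies the $m\ge 3$ case upstairs, and descends using $f_*(\xi_{3m})=\pm\xi_m$ together with the compatibility $f_*\circ(\rho'_D)^*=\rho_D^*\circ f_*$. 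To complete your proof you must either supply the N\'eron-polygon orientation argument or adopt this base-change reduction; as written, the $\mathrm{I}_2$ case is unproved.
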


\begin{proof}
Since $\rho_D$ preserves fibers and we have $\rho_D^*=(\rho_D^{-1})_*$, the former part is clear.
For the latter part, it is enough to prove that $\rho_D^*\colon I_s\to I_s$ is an identity map.
First, we consider the case $X_s$ is $\mathrm{I}_m$-type fiber for $m\ge 3$.
Since $X$ is the Kodaira-N\'eron model for the elliptic curve $X_\eta$, so the group law on $X_\eta$ induces the group structure on the smooth locus $X_s^\sharp$ of $X_s$ (cf. \cite[Theorem 5.22]{SS}).
By considering the quotient by the identity component, we have a surjective morphism
\begin{equation*}
\varpi\colon X_s^\sharp \twoheadrightarrow \Z/m\Z
\end{equation*}
between group varieties \cite{neron}.
We label the irreducible components $\Theta_0,\Theta_1,\ldots, \Theta_{m-1}$ of $X_s$ so that $\varpi^{-1}(i)$ is contained in $\Theta_i$.
Then these components satisfies $\Theta_0\cdot \Theta_1 = \Theta_1\cdot \Theta_2=\cdots =\Theta_{m-2}\cdot \Theta_{m-1} = \Theta_{m-1}\cdot \Theta_0=1$, so they are labeled cyclically.
We label their intersection points $p_i = \Theta_i\cap \Theta_{i+1}$.
By Proposition \ref{generator}, if we take a rational function $\psi_i\in \C(\Theta_i)^\times$ such that $\div_{\Theta_i}(\psi_i)=p_i-p_{i-1}$, the $(1,1)$-cycle $\xi_m\sum_{i}(\Theta_i,\psi_i)$ is a generator of $I_s$.

Since $D$ is a section, the intersection $X_s\cap D$ is contained in the smooth locus $X_s^\sharp$.
Let $k=\varpi(X_s\cap D)\in \Z/m\Z$.
Then we have $\rho_D(\Theta_i) = \Theta_{i+k}$ and
\begin{equation*}
\rho_D(p_i) = \rho_D(\Theta_i\cap \Theta_{i+1}) = \rho_D(\Theta_i)\cap \rho_D(\Theta_{i+1}) = \Theta_{i+k}\cap \Theta_{i+k+1} = p_{i+k}.
\end{equation*}
for $i\in \Z/m\Z$.
This implies 
\begin{equation*}
\div_{\Theta_{i}}((\rho_D)^\sharp(\varphi_{i+k})) = (\rho_D)^{-1}(\div_{\Theta_{i+k}}(\varphi_{i+k})) = (\rho_D)^{-1}(p_{i+k}-p_{i+k-1}) = p_i-p_{i-1}.
\end{equation*}
So $(\rho_D)^\sharp(\varphi_{i+k})$ coincides with $\varphi_i$ coincide up to constant multiplication.
Thus we have 
\begin{equation*}
\rho_D^*(\xi_m) = \sum_i \left((\rho_D)^{-1}(\Theta_i), (\rho_D)^\sharp(\varphi_{i})\right) =\sum_i (\Theta_{i-k}, \varphi_{i-k}) = \xi_m  \quad \text{in }I_s.
\end{equation*}
Thus we have the result.

\begin{figure}
\includegraphics[width = 130mm]{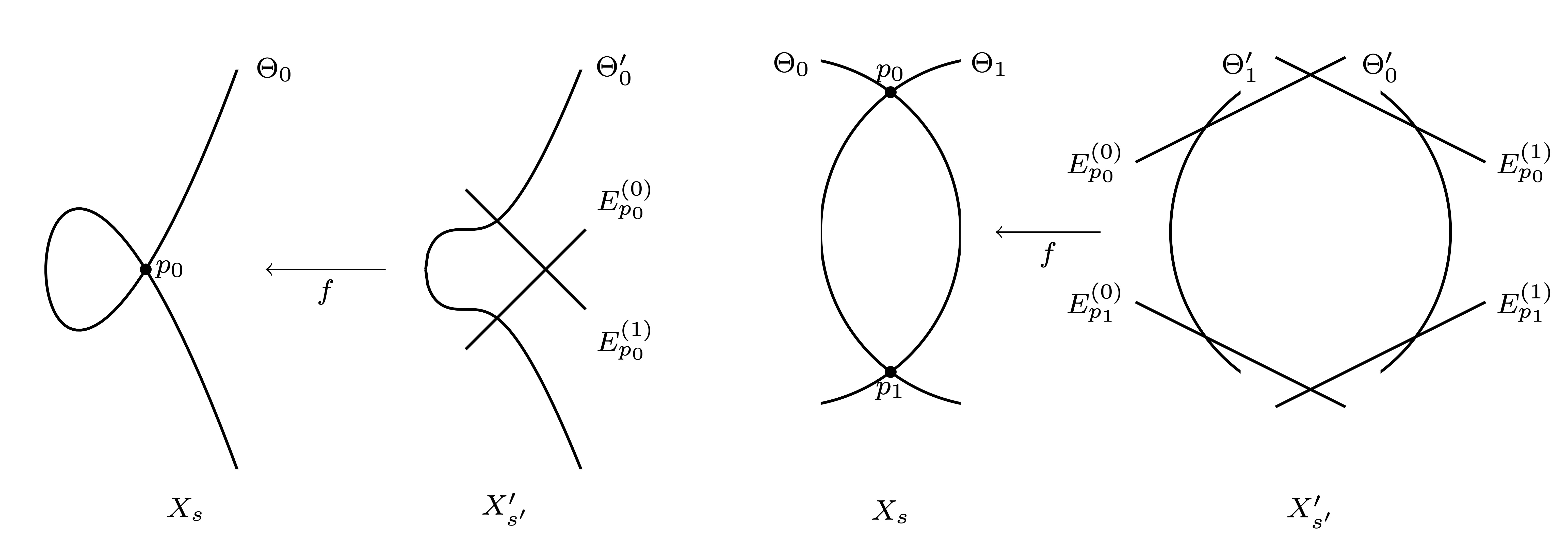}
\caption{From $I_m$-type to $I_{3m}$-type}
\label{basechangefig}
\end{figure}

Finally, we consider the case $X_s$ is $\mathrm{I}_1$ or $\mathrm{I}_2$ type singular fiber.
There exists a triple covering $S'\to S$ from a smooth curve $S'$ totally ramified at $s\in S$.
Let $s'\in S'$ be the point above $s\in S$ and $X'\to X\times_{S} S'$ be the resolution of singularities.
Then $\pi'\colon X'\to  X\times_{S} S' \to S'$ is an elliptic fibration.
The nodes $p_\bullet$ on $X_s$ become $A_2$-type singular points after the base change $X\times_{S} S' \to X$.
By the resolution of singularities $X'\to X\times_{S} S'$, we have two exceptional curves $E_{p_\bullet}^{(0)}$ and $E_{p_\bullet}^{(1)}$ over each $A_2$-type singular point.
Thus $X_{s'}$ becomes a singular fiber of type $I_{3m}$ (see figure \ref{basechangefig}). 
Furthermore, the base change of $\rho_D\colon X\to X$ induces the automorphism $\rho'_{D}\colon X'_{s'}\to X'_{s'}$ such that the left diagram in the \eqref{fibercommutative} commutes.
\begin{equation}\label{fibercommutative}
\begin{tikzcd}
X'_{s'} \arrow[r,"\rho'_D"]\arrow[d,"f"] & X'_{s'}\arrow[d,"f"]& & \CH^1(X'_{s'},1) \arrow[d,"f_*"] \arrow[r,"(\rho_D')^*"] & \CH^1(X'_{s'},1)\arrow[d,"f_*"]\\
X_s \arrow[r,"\rho_D"] & X_s && \CH^1(X_s,1) \arrow[r,"\rho_D^*"] &  \CH^1(X_s,1)
\end{tikzcd}
\end{equation}
Then the right diagram in \eqref{fibercommutative} commutes.
Note that $X'$ is not necessarily relatively minimal, but since $X'_{s'}$ does not contain $(-1)$-curves, by considering the model of $X'$ blowing down $(-1)$-curves, we have $\rho_D^*(\xi_{3m}) = \xi_{3m}$.
By the explicit description of the push-forward map $f_*\colon \CH^1(X'_{s'},1)\to \CH^1(X'_{s'},1)$, we have $f_*(\xi_{3m}) = \pm \xi_{m}$.
Then by the commutative diagram, we have $\rho_D^*(\xi_m) = \xi_m$.
\end{proof}

\subsection{Cycles of section type}
Let $\pi\colon X\to S$ be (not necessarily relatively minimal) elliptic fibration and $D\subset X$ be a section satisfying the condition $(\star)$.

\begin{prop}\label{ofsectiontype}
Let $\xi \in \CH^2(X,1)$ be a section type in the sense of Lemma \ref{verticalsectionlemma}.
Then we have
\begin{equation*}
\rho_D^*(\xi_\ind) -\xi_\ind \in F(\pi).
\end{equation*}
\end{prop}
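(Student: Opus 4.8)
The plan is to reduce the statement to a computation on the generic fiber $X_\eta$ via the localization sequence \eqref{localexact}. Since the image of $i_*$ equals $\ker(j^*)$, and since $j^*$ carries $\CH^2(X,1)_{\dec}$ into the subgroup of $\CH^2(X_\eta,1)$ that is decomposable with \emph{constant} coefficients in $\C^\times$ (i.e.\ the image of $\mathrm{Pic}(X_\eta)\otimes\C^\times$), it suffices to show that $j^*(\rho_D^*\xi-\xi)$ is $\C^\times$-decomposable on $X_\eta$. Indeed, if $j^*(\rho_D^*\xi-\xi)=j^*(\delta)$ for some $\delta\in\CH^2(X,1)_{\dec}$, then $\rho_D^*\xi-\xi-\delta$ lies in the image of $i_*$, and passing to the indecomposable part gives $\rho_D^*(\xi_\ind)-\xi_\ind\in F(\pi)$.

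Next I would make the generic fiber computation explicit. Since $\xi$ is of section type, by Lemma \ref{verticalsectionlemma}(2) I may represent $j^*\xi$ by $\sum_j (P_j,\psi_j)\in\bigoplus_{p\in X_\eta^{(1)}}\kappa(p)^\times$ supported on $\C(S)$-rational points $P_j$, so that each $\psi_j\in\kappa(P_j)^\times=\C(S)^\times$ is a \emph{unit} on $X_\eta$; the corresponding representative on $X$ has horizontal part $\sum_j(D_j,\psi_j)$ on the sections $D_j=\overline{P_j}$. As $(\rho_D)_\eta$ is the translation by $D_\eta$, its pullback sends $(P_j,\psi_j)$ to $(P_j-D_\eta,\psi_j)$, so $j^*(\rho_D^*\xi-\xi)=\sum_j\big[(P_j-D_\eta,\psi_j)-(P_j,\psi_j)\big]$. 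For each $j$ the group law on $X_\eta$ furnishes $f_j\in\C(X)^\times$ with $\div_{X_\eta}(f_j)=(P_j-D_\eta)+(D_\eta)-(P_j)-(O)$, where $O=Z_\eta$. Because $\psi_j$ is a unit, the tame symbol of $\{\psi_j,f_j\}\in K_2^M(\C(X))$ is
\begin{equation*}
T(\{\psi_j,f_j\})=(P_j-D_\eta,\psi_j)+(D_\eta,\psi_j)-(P_j,\psi_j)-(O,\psi_j),
\end{equation*}
and hence, modulo the image of $T$,
\begin{equation*}
j^*(\rho_D^*\xi-\xi)\equiv \sum_j\big[(O,\psi_j)-(D_\eta,\psi_j)\big]=([O]-[D_\eta])\otimes\Psi,\qquad \Psi:=\textstyle\prod_j\psi_j .
\end{equation*}

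The decisive point, which I expect to be the crux, is that the single residual class $([O]-[D_\eta])\otimes\Psi$ is $\C^\times$-decomposable, and this holds precisely because $\Psi$ is a nonzero \emph{constant}. To prove the constancy I would apply $\pi_*$ to the cycle relation $\sum_j\div_{D_j}(\psi_j)+\div(\widetilde\xi_v)=0$ on $X$: the horizontal part pushes forward to $\sum_j\div_S(\psi_j)=\div_S(\Psi)$, while the vertical part pushes forward to $0$, since for each fiber component $\Theta$ the divisor of a function on the \emph{proper} curve $\Theta$ has degree $0$, and $\pi_*$ of a degree-$0$ zero-cycle supported in one fiber vanishes. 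Thus $\div_S(\Psi)=0$, so $\Psi\in\C^\times$, and therefore $([O]-[D_\eta])\otimes\Psi=j^*\big(([Z]-[D])\otimes\Psi\big)$ with $([Z]-[D])\otimes\Psi\in\CH^2(X,1)_{\dec}$. Together with the first paragraph this gives $\rho_D^*(\xi_\ind)-\xi_\ind\in F(\pi)$. The main obstacle is therefore not the symbol construction, which is immediate from the group law, but the properness input forcing $\Psi$ to be constant: without it the residual would be a genuinely indecomposable two-section contribution, and it is exactly the completeness of the fibers that makes it decomposable.
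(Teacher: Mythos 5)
Your proof is correct and is essentially the paper's own argument: under the identification $\C(X_\eta)=\C(X)$, your symbols $\{\psi_j,f_j\}\in K_2^M(\C(X_\eta))$ are exactly the symbols $\{\widetilde{\varphi}_i,\psi_i\}$ the paper constructs (base-field unit paired with the function realizing the group-law rational equivalence), and your properness argument forcing $\Psi\in\C^\times$ is precisely the paper's Claim \ref{claim}. The only difference is organizational: you run the tame-symbol computation on the generic fiber and absorb vertical terms via exactness of \eqref{localexact}, whereas the paper computes on $X$ itself and absorbs the vertical leftover via Lemma \ref{verticalsectionlemma}(1) --- the same mechanism, since that lemma is proved by the diagram chase underlying \eqref{localexact}.
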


\begin{proof}
By the assumption, $\xi$ is represented by a $\widetilde{\xi}\in \mathrm{Ker}\left(\bigoplus_{C\in X^{(1)}}\C(C)^\times\xrightarrow{\div}\bigoplus_{p\in X^{(2)}}\Z\cdot p\right)$ such that
\begin{equation*}
\widetilde{\xi}_h = (C_1,\varphi_1)+(C_2,\varphi_2)+\cdots +(C_n,\varphi_n)
\end{equation*}
where $C_1,C_2,\ldots, C_n$ are sections.
For each $i=1,2,\ldots, n$, let $\widetilde{\varphi}_i\in \C(X)^\times$ be the rational function defined by the composition
\begin{equation*}
\begin{tikzcd}
X\arrow[r,"\pi"]& S & \arrow[l,"\pi|_{C_i}"',"\sim"] C_i \arrow[r,"\varphi_i"] &\P^1.
\end{tikzcd}
\end{equation*}
Furthermore, since the codimension 1-cycles $C_{i,\eta}+Z_\eta$ and $\rho_D^{-1}(C_{i})_\eta+D_\eta$ on the elliptic curve $X_\eta$  are rationally equivalent, there exists a $\psi_i\in \C(X_\eta)^\times$ such that 
\begin{equation*}
\div_{X_\eta}(\psi_i) = C_{i,\eta}+Z_\eta - \rho_D^{-1}(C_{i})_\eta- D_\eta \quad \text{in }\bigoplus_{p\in X_\eta^{(1)}}\Z\cdot p.
\end{equation*}
By the identification $\C(X_\eta) = \C(X)$, this implies that 
\begin{equation*}
\div_{X}(\psi) = C_i + Z - \rho_D^{-1}(C_{i}) - D +(\text{vertical curves}).
\end{equation*}
Since the support of $\div_X(\widetilde{\varphi}_i)$ is contained in vertical curves, by the explicit description of the tame symbol map, we have
\begin{equation*}
T(\{\widetilde{\varphi}_i,\psi_i\}) = (C_i,\widetilde{\varphi}_i|_{C_i}) + (Z, \widetilde{\varphi}_i|_Z) - (\rho_D^{-1}(C_{i}),\widetilde{\varphi}_i|_{\rho_D^{-1}(C_i)}) - (D,\widetilde{\varphi}_i|_{D}) + (\text{vertical cycles})
\end{equation*}
where vertical cycles means a cycle whose support is contained in vertical curves.

Put $\Xi = \{\widetilde{\varphi}_1,\psi_1\}+\{\widetilde{\varphi}_2,\psi_2\}+\cdots +\{\widetilde{\varphi}_n\psi_n\} \in K_2^M(\C(X))$, then we have
\begin{equation*}
\begin{aligned}
T(\Xi) &= \sum_i (C_i,\widetilde{\varphi}_i|_{C_i}) + \left(Z,\prod_i\widetilde{\varphi}_i|_Z\right)\\
&- \sum_{i}(\rho_D^{-1}(C_{i}),\widetilde{\varphi}_i|_{\rho_D^{-1}(C_i)}) - \left(D,\prod_i\widetilde{\varphi}_i|_D\right) + (\text{vertical cycles}).
\end{aligned}
\end{equation*}
By definition, we have $\widetilde{\varphi}_i|_{C_i} = \varphi_i$ and $\widetilde{\varphi}_i|_{\rho_D^{-1}(C_i)} = \rho_D^\sharp(\varphi_i)$, so we have 
\begin{equation}\label{chuukan}
T(\Xi) = \widetilde{\xi}  - \rho_D^*\left(\widetilde{\xi}\right) + \left(Z,\prod_i\widetilde{\varphi}_i|_Z\right) -  \left(D,\prod_i\widetilde{\varphi}_i|_D\right) +\widetilde{\xi}'
\end{equation}
where $\widetilde{\xi}'\in \bigoplus_{C\in X^{(1)}}\C(C)^\times$ is a vertical cycle.
We will show the following claim.
\begin{claim}\label{claim}
$\displaystyle\prod_i\widetilde{\varphi}_i|_Z, \: \displaystyle\prod_i\widetilde{\varphi}_i|_D \in \C^\times$.
\end{claim}
\begin{proof}[Proof of Claim \ref{claim}]
Since $\pi|_Z\colon  Z\to S$ and $\pi|_D\colon D\to S$ are isomorphisms, it is enough to prove
\begin{equation}\label{ETS}
\pi_*\left(\div_{Z}\left(\displaystyle\prod_i\widetilde{\varphi}_i|_Z\right)\right)=0,\quad \pi_*\left(\div_{D}\left(\displaystyle\prod_i\widetilde{\varphi}_i|_D\right)\right)=0
\end{equation}
as a codimension 1-cycles on $S$.
Since proofs of the both cases are similar, we will prove only the first equality.
By definition, the left-hand side of \eqref{ETS} can be transformed into 
\begin{equation}\label{ETStrans}
\begin{aligned}
&\pi_*\left(\div_{Z}\left(\displaystyle\prod_i\widetilde{\varphi}_i|_Z\right)\right) = \pi_*\left(\sum_i\div_{Z}(\widetilde{\varphi}_i|_Z)\right)= \sum_i\pi_*\left(\div_Z(\widetilde{\varphi}_i|_Z)\right)  \\
&= \sum_i \div_{S}(\varphi_i\circ (\pi|_{C_i})^{-1})=\pi_*\left(\sum_{i} \div_{C_i}(\varphi_i)\right) = \pi_*(\div(\widetilde{\xi}_h)).\quad \cdots (*)
\end{aligned}
\end{equation}
Since $\widetilde{\xi}\in \mathrm{Ker}\left(\bigoplus_{C\in X^{(1)}}\C(C)^\times\xrightarrow{\div}\bigoplus_{p\in X^{(2)}}\Z\cdot p\right)$, we have $\div(\widetilde{\xi}_h)=-\div(\widetilde{\xi}_v)$.
For each component $(F_j,\theta_j)$ of $\widetilde{\xi}_v$, $F_j$ is vertical, so we have $\pi_*(\div_{F_j}(\theta_j)) = 0$.
This implies $\pi_*(\div(\xi_v))=0$, so $(*)$ in  \eqref{ETStrans} is 0, thus we have proved the first equation in \eqref{ETS}.
\end{proof}
By Claim \ref{claim}, the element $\widetilde{\xi}''=  \left(Z,\prod_i\widetilde{\varphi}_i|_Z\right) -  \left(D,\prod_i\widetilde{\varphi}_i|_D\right)$ represents a decomposable cycle $\xi''$.
By \eqref{chuukan}, we have
\begin{equation*}
T(\Xi) = \widetilde{\xi}  - \rho_D^*(\widetilde{\xi}) + \widetilde{\xi}' + \widetilde{\xi}''.
\end{equation*}
Since $\widetilde{\xi},\rho_D^*(\widetilde{\xi}),T(\Xi),\widetilde{\xi}''\in\mathrm{Ker}\left(\bigoplus_{C\in X^{(1)}}\C(C)^\times\xrightarrow{\div}\bigoplus_{p\in X^{(2)}}\Z\cdot p\right)$, $\xi'$ also lies in the kernel.
Thus $\widetilde{\xi}'$ represents a $(2,1)$-cycle $\xi'$.
Then by Lemma \ref{verticalsectionlemma}, $\xi' \in \mathrm{Im}(i_*)$.
By the equation above, we have $0=\xi-\rho_D^*(\xi)+\xi' + \xi''$ and since $\xi''$ is decomposable, we have
\begin{equation*}
\rho_D^*(\xi_\ind) -\xi_{\ind} = \xi_\ind' \in F(\pi).
\end{equation*}
This finishes the proof.
\end{proof}

\subsection{Proof of Theorem \ref{newmainthm}}
Finally, we can prove the following.

\begin{prop}\label{mainthm}
Let $\pi\colon X\to S$ be a relatively minimal elliptic fibration and $D\subset X$ be a section.
Assume that either of the following conditions holds.
\begin{enumerate}
\item[$(\mathrm{i})$] $D$ is a torsion section.
\item[$(\mathrm{ii})$] $\pi$ has no multiplicative singular fiber.
\end{enumerate}
Then, $\rho_D\colon \CH^2(X,1)_\ind\otimes \Q\to \CH^2(X,1)_\ind\otimes \Q$ is the identity map.
\end{prop}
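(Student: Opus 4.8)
The plan is to show, for an arbitrary class $\xi\in\CH^2(X,1)$, that
\[
\zeta:=\rho_D^*(\xi_\ind)-\xi_\ind
\]
vanishes in $\CH^2(X,1)_\ind\otimes\Q$. I would split this into two stages. First, a base change argument together with Proposition \ref{ofsectiontype} shows that $\zeta$ always lies in $F(\pi)\otimes\Q$, with no use yet of either hypothesis. Second, hypotheses $(\mathrm{i})$ and $(\mathrm{ii})$ are each used to upgrade this to $\zeta=0$.

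For the first stage, consider the image $j^*(\xi)\in\CH^2(X_\eta,1)$ under the restriction map in \eqref{localexact}. It is represented by a finite sum supported on closed points $p$ of the generic fiber $X_\eta$, each with residue field $\kappa(p)$ finite over $K=\C(S)$. Choose a finite extension $K'/K$ containing all these $\kappa(p)$, and let $g\colon S'\to S$ be the normalization of $S$ in $K'$, a finite map of smooth projective curves. Let $f\colon\widetilde X\to X$ be the composite of a resolution $\widetilde X\to X\times_S S'$ with the first projection; then $\widetilde\pi\colon\widetilde X\to S'$ is an elliptic fibration with generic fiber $X_\eta\otimes_K K'$, and $f$ fits into a commutative square of the form \eqref{comm}. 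The automorphism $\rho_D\times\id_{S'}$ of $X\times_S S'$ lifts to $\rho_{\widetilde D}\in\Aut(\widetilde X)$, which realizes translation by the induced section $\widetilde D$; thus $(\star)$ holds on $\widetilde X$ and $f\circ\rho_{\widetilde D}=\rho_D\circ f$. By the choice of $K'$ (and since we are in characteristic $0$) the support of $j^*(\xi)$ becomes $K'$-rational after base change, so $f^*\xi$ is of section type by Lemma \ref{verticalsectionlemma}(2). Proposition \ref{ofsectiontype} then gives $\rho_{\widetilde D}^*\big((f^*\xi)_\ind\big)-(f^*\xi)_\ind\in F(\widetilde\pi)$. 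Pushing this down by $f_*$ and using $f_*\circ\rho_{\widetilde D}^*=\rho_D^*\circ f_*$ (from $f\circ\rho_{\widetilde D}=\rho_D\circ f$ and $\rho^*=(\rho^{-1})_*$), the relation $f_*f^*=d$ with $d=\deg g$ from \eqref{multd}, and the functoriality $f_*(F(\widetilde\pi))\subset F(\pi)$ attached to \eqref{comm}, one obtains
\[
d\,\zeta=\rho_D^*(d\,\xi_\ind)-d\,\xi_\ind\in F(\pi).
\]
As $d\neq0$, this yields $\zeta\in F(\pi)\otimes\Q$ for every $\xi$.

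It remains to kill $\zeta$. Under $(\mathrm{ii})$ the fibration $\pi$ has no multiplicative fiber, so $F(\pi)=0$ by Proposition \ref{generator} and $\zeta=0$ at once. Under $(\mathrm{i})$, let $n$ be the order of the torsion section $D$, so $\rho_D^n=\id$ and $(\rho_D^*)^n=\id$ on $\CH^2(X,1)_\ind\otimes\Q$. By Proposition \ref{singularfiberprop}, $\rho_D^*$ acts as the identity on $F(\pi)$, hence $(\rho_D^*)^k\zeta=\zeta$ for all $k$ because $\zeta\in F(\pi)\otimes\Q$. The telescoping identity
\[
0=(\rho_D^*)^n(\xi_\ind)-\xi_\ind=\sum_{k=0}^{n-1}(\rho_D^*)^k\zeta=n\,\zeta
\]
then forces $\zeta=0$ in the $\Q$-vector space $\CH^2(X,1)_\ind\otimes\Q$, completing both cases.

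I expect the main obstacle to be the bookkeeping of the base change in the first stage: one must verify that $\rho_D$ lifts to an automorphism $\rho_{\widetilde D}$ of the resolution $\widetilde X$ satisfying $(\star)$ (so that Proposition \ref{ofsectiontype} applies even though $\widetilde X$ need not be relatively minimal), that restriction to the generic fiber is compatible with $f^*$ and with base change of the support, and that $f_*(F(\widetilde\pi))\subset F(\pi)$. A convenient feature of this route is that it never requires controlling the fibers of $\widetilde X$: the base change may well create new multiplicative fibers out of additive fibers of $\pi$, but since the final relation $d\,\zeta\in F(\pi)$ lives on $X$, only the fibers of $\pi$ itself matter, and this is exactly where $(\mathrm{i})$ and $(\mathrm{ii})$ are imposed.
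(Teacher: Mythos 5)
Your proposal follows the same route as the paper's proof: base change so that the cycle becomes of section type, apply Proposition \ref{ofsectiontype}, push forward using $f_*(F(\widetilde\pi))\subset F(\pi)$ and \eqref{multd} to conclude $\rho_D^*(\xi_\ind)-\xi_\ind\in F(\pi)\otimes\Q$ for an arbitrary section, and then kill this class via Proposition \ref{generator} in case $(\mathrm{ii})$ and via Proposition \ref{singularfiberprop} plus the telescoping identity in case $(\mathrm{i})$. The second stage, the compatibilities $f\circ\rho_{\widetilde D}=\rho_D\circ f$, $f_*\circ\rho_{\widetilde D}^*=\rho_D^*\circ f_*$, and the closing remark that only the fibers of $\pi$ itself matter are all correct and match the paper.

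There is, however, one step that fails as written: you choose $K'$ to be ``a finite extension of $K=\C(S)$ containing all the $\kappa(p)$'' and assert that the support of $j^*(\xi)$ then becomes $K'$-rational after base change. Merely containing the residue fields is not enough. The fiber of the base change over a point $p$ is $\Spec\bigl(\kappa(p)\otimes_K K'\bigr)$, and this is a disjoint union of $K'$-rational points only if $K'$ contains \emph{every} $K$-conjugate of $\kappa(p)$ inside $\overline{K}$. If $\kappa(p)/K$ is not normal --- say a non-Galois cubic extension, with $K'=\kappa(p)$ --- then $\kappa(p)\otimes_K K'$ splits as $K'\times L$ with $[L:K']=2$, so the pulled-back cycle has a component supported at a non-rational point of $\widetilde X_{\eta'}$ and Lemma \ref{verticalsectionlemma}(2) cannot be invoked. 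Characteristic $0$ gives you separability (so the tensor product is reduced), but not normality. The fix is exactly what the paper does: take $K'$ to be a finite \emph{Galois} extension of $\C(S)$ containing all the $\kappa(p)$ (equivalently, all the function fields $\C(C_i)$ of the horizontal components of a representative of $\xi$). With that single correction your argument goes through verbatim.
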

\begin{proof}
First, we will prove that 
\begin{equation}\label{toshow}
\rho_D^*(\xi_\ind)-\xi_\ind \in F(\pi)\otimes \Q
\end{equation}
for any section $D$.

Let $\xi \in \CH^2(X,1)$ and we take a lift $\widetilde{\xi}\in \bigoplus_{C\in X^{(1)}}\C(C)^\times$.
Let denote $\widetilde{\xi}_h = (C_1,\varphi_1)+(C_2,\varphi_2)+\cdots (C_n,\varphi_n)$.
Since $C_i$ are horizontal, $\C(C_i)/\C(S)$ is a finite extension of fields.
We embed them in an algebraic closure $\overline{\C(S)}$, and take a finite Galois extension $K/\C(S)$ in $\overline{\C(S)}$ such that $K$ contains all $\C(C_1),\C(C_2),\ldots, \C(C_n)$.
Let $S'$ be a smooth projective curve whose function field is isomorphic to $K$, and $g\colon S'\to S$ be the finite morphism induced by $\C(S)\hookrightarrow K$.
Let $X'\to X\times_{S} S'$ be the resolution of singularities.
Then $\pi'\colon X'\to X\times_{S} S'\to S' $ is a (not necessarily relatively minimal) elliptic fibration and the morphism $f\colon X'\to X\times_{S} S'\to X$ and $g$ fits into the commutative diagram \eqref{comm}.

Let $\eta'$ be the generic point of $S'$.
We have the following commutative diagram.
\begin{equation*}
\begin{tikzcd}
\CH^2(X,1) \arrow[r,"j^*"] \arrow[d,"f^*"]& \CH^2(X_\eta, 1) \arrow[d,"f_{\eta}^*"] \\
\CH^2(X',1) \arrow[r,"(j')^*"] & \CH^2(X'_{\eta'},1) 
\end{tikzcd}
\end{equation*}
Thus we have 
\begin{equation}\label{pullback}
(j')^*(f^*(\xi)) = f_{\eta}^*(j^*(\xi)).
\end{equation}
Furthermore, since $K$ contains the field $\C(C_i)$, we have the decomposition
\begin{equation}\label{decomp}
C_{i,\eta}\times_\eta \eta' = \bigsqcup_{j=1}^{m_i}C_{i,\eta'}^{(j)}
\end{equation}
where $C_{i,\eta'}^{(j)}$ are $K=\C(S')$-rational points on $X'_{\eta'}$.
Then by the decomposition \eqref{decomp} and the explicit description of the flat pull-back map on Gersten complex \cite{Rost}, the right-hand side of \eqref{pullback} is represented by 
\begin{equation*}
\sum_{i=1}^n\sum_{j=1}^{m_i}(C_{i,\eta'}^{(j)}, \varphi_i) \in \displaystyle\bigoplus_{p\in (X'_{\eta'})^{(1)}}\kappa(p)^\times.
\end{equation*}
Thus, by Lemma \ref{verticalsectionlemma}, $f^*(\xi)\in \CH^2(X',1)$ is a section type.
The base change of $\rho_D\colon X\to X$ induces the automorphism $\rho_D'\colon X'\to X'$ such that the diagram
\begin{equation}\label{basechangecomm}
\begin{tikzcd}
X' \arrow[r,"\rho'_D"]\arrow[d,"f"] & X'\arrow[d,"f"]& \\
X \arrow[r,"\rho_D"] & X &&
\end{tikzcd}
\end{equation}
commute, then by Proposition \ref{ofsectiontype}, we have $(\rho_D')^*f^*(\xi_\ind) - f^*\xi_\ind\in F(\pi')$.
Since $f_*(F(\pi')) \subset F(\pi)$, we have
\begin{equation*}
f_*(\rho_D')^*f^*(\xi_\ind) - f_*f^*\xi_\ind\in F(\pi).
\end{equation*}
By the relation $(\rho_D')^* = (\rho_D')^{-1}_*$ and the commutative diagram \eqref{basechangecomm},
\begin{equation*}
f_*(\rho_D')^*f^*(\xi_\ind) = f_* (\rho_D')^{-1}_*f^*(\xi_\ind) = (\rho_D^{-1})_*f_*f^* = \rho_D^*f_*f^*(\xi_\ind).
\end{equation*}
Finally, by \eqref{multd}, we have $N(\rho_D^*(\xi_\ind)-\xi_\ind) \in F(\pi)$ where $N=[K:\C(S)]$, so this implies \eqref{toshow}.

If we assume the condition (ii), we have $F(\pi)=0$ by Proposition \ref{generator}, the statement immediately follows from \eqref{toshow}.

We assume the condition (i).
By \eqref{toshow}, there exists $\xi'\in F(\pi)\otimes \Q$ such that $\xi' =\rho_D^*(\xi_\ind)-\xi_\ind$.
Since we have $\rho_D^*(\xi') = \xi'$ by Proposition \ref{singularfiberprop}, for any $m\in \Z_{>0}$, we have
\begin{equation*}
(\rho_D^m)^*(\xi_\ind) = \xi_{\ind}  + m\xi'.
\end{equation*}
If we take $m$ as the order of $D_\eta$, the left-hand side equals $\xi_\ind$. 
Thus we have $m\xi'=0$, so $\xi'=0$ in $\CH^2(X,1)_\ind\otimes \Q$.
This implies $\rho_D^*(\xi_\ind) = \xi_\ind $ in $\CH^2(X,1)_\ind \otimes \Q$, so we have the result.
\end{proof}

Finally, we can prove Theorem \ref{newmainthm}
\begin{proof}[Theorem \ref{newmainthm}]
Let $X$ be a $K3$ surface and  $\pi\colon X\to S$ be an elliptic fibration.
Since $X$ is a minimal surface, $\pi$ is relatively minimal.
Thus, we can apply Proposition \ref{mainthm} for $\pi$, thus $\mathrm{MW}(\pi)_\mathrm{tor}$ acts trivially on $\CH^2(X,1)_\ind\otimes \Q$.
Thus $\mathrm{MW}_\mathrm{tor}$ acts trivially on $\CH^2(X,1)_\ind\otimes \Q$.
\end{proof}

For an abelian variety $A$, the following analogue of Proposition \ref{mainthm} holds
This result and proof below was taught by Ma Shohei.
\begin{prop}\label{abeliansurface}
For a torsion element $a\in A_{\mathrm{tor}}$ of an abelian variety $A$, let $T_a\colon A\to A$ denote the translation by $a$.
Then 
\begin{equation*}
T_a^*\colon \CH^p(X,q)\otimes \Q\to \CH^p(X,q)\otimes \Q
\end{equation*}
is the identity map.
\end{prop}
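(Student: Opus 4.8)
The plan is to prove that for a torsion element $a\in A_{\mathrm{tor}}$, say of order $N$, the translation $T_a$ acts as the identity on $\CH^p(A,q)\otimes\Q$. The key structural fact I would exploit is that the multiplication-by-$N$ map $[N]\colon A\to A$ kills $a$, in the sense that $[N]\circ T_a = [N]$ as morphisms of $A$. Indeed, $[N](x+a) = [N]x + [N]a = [N]x$ since $Na = 0$. Taking pull-backs and using functoriality $(g\circ f)^* = f^*\circ g^*$ from Section 2.1, this gives $T_a^*\circ [N]^* = [N]^*$ on $\CH^p(A,q)$.

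The heart of the argument is then to understand the action of $[N]^*$ on higher Chow groups. For an abelian variety of dimension $g$, the classical Beauville decomposition (for $q=0$) and its higher analogue show that $[N]^*$ acts on $\CH^p(A,q)\otimes\Q$ with eigenvalues that are powers of $N$; in particular $[N]^*$ is a \emph{bijective} operator on $\CH^p(A,q)\otimes\Q$. The cleanest way to see injectivity without invoking the full eigenspace decomposition is via the projection formula: since $[N]$ is a finite surjective (in fact flat, \'etale) morphism of degree $N^{2g}$, we have $[N]_*[N]^* = N^{2g}\cdot\mathrm{id}$ by \eqref{multd}. Hence $[N]^*$ is injective after tensoring with $\Q$, because if $[N]^*\alpha = 0$ then $N^{2g}\alpha = [N]_*[N]^*\alpha = 0$, forcing $\alpha = 0$ in the $\Q$-vector space $\CH^p(A,q)\otimes\Q$.

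First I would record the identity $[N]\circ T_a = [N]$ and pull it back to obtain $T_a^*\circ[N]^* = [N]^*$. Second I would establish that $[N]^*$ is injective on $\CH^p(A,q)\otimes\Q$ using $[N]_*[N]^* = \deg([N])\cdot\mathrm{id}$ and the fact that $\deg([N]) = N^{2\dim A}\neq 0$. Combining these, for any $\beta$ in the image of $[N]^*$ we get $T_a^*\beta = \beta$; but since $[N]^*$ is injective on a $\Q$-vector space it is not a priori surjective, so this only handles the image of $[N]^*$. To conclude on all of $\CH^p(A,q)\otimes\Q$, I would rewrite the relation as $(T_a^* - \mathrm{id})\circ[N]^* = 0$ on the $\Q$-vector space, and then invoke the stronger statement that $[N]^*$ is actually an \emph{automorphism} of $\CH^p(A,q)\otimes\Q$.

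The main obstacle is precisely this last point: surjectivity of $[N]^*$. Injectivity is formal from the projection formula, but surjectivity is not. The standard resolution is the higher Chow analogue of the Beauville--Deninger--Murre decomposition, under which $\CH^p(A,q)\otimes\Q = \bigoplus_s \CH^p(A,q)^{(s)}$ with $[N]^*$ acting by the scalar $N^{2p-q-s}$ (the weights lying in a bounded range $0 \le s \le \min(2p-q,\,2\dim A)$), so each graded piece is scaled by a nonzero power of $N$ and $[N]^*$ is invertible. Granting this, $T_a^*$ agrees with $\mathrm{id}$ on each graded piece: on $\CH^p(A,q)^{(s)}$ the operator $[N]^*$ is the nonzero scalar $N^{2p-q-s}$, and $T_a^*$ commutes with the decomposition (translations preserve the Beauville grading since they induce the identity on $H^*$ and hence on the graded quotients), so $T_a^*\circ[N]^* = [N]^*$ becomes $N^{2p-q-s}\,T_a^* = N^{2p-q-s}\,\mathrm{id}$, giving $T_a^* = \mathrm{id}$ after dividing by the invertible scalar. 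The only delicate verification is that the higher Chow groups of abelian varieties carry such a decomposition with $[N]^*$ diagonalized by powers of $N$; this is available in the literature, and once cited the argument closes immediately.
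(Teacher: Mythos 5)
Your argument is correct in substance, but it takes a genuinely different route from the paper. The paper's proof goes through the Fourier--Mukai transform: $F_{\mathcal{P}}(\xi)=(p_2)_*(\exp([\mathcal{P}])\,p_1^*\xi)$ is an isomorphism $\CH(A,q)\otimes\Q\to\CH(\hat{A},q)\otimes\Q$, and Beauville's identity $F_{\mathcal{P}}\circ T_a^* = \exp([\mathcal{P}|_{\{a\}\times\hat{A}}])\cdot F_{\mathcal{P}}$ reduces everything to the observation that for torsion $a$ the class $[\mathcal{P}|_{\{a\}\times\hat{A}}]\in\mathrm{Pic}(\hat{A})$ is torsion, hence $\exp([\mathcal{P}|_{\{a\}\times\hat{A}}])=1$ rationally, so $T_a^*=\id$. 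You instead use the relation $[N]\circ T_a=[N]$, get injectivity of $[N]^*$ formally from the projection formula \eqref{multd}, and then invoke the Beauville--Deninger--Murre decomposition to upgrade injectivity to bijectivity of $[N]^*$. The two proofs rest on the same underlying technology (the eigenspace decomposition is itself established via the Fourier transform), and both must address the same delicate point, namely that this technology applies to higher Chow groups and not just to $\CH^p(A)$: the paper gets this because $F_{\mathcal{P}}$ and Beauville's identity are correspondence-theoretic, while you get it because the Deninger--Murre decomposition of the motive $h(A)$, with $[N]^*$ acting on $h^i(A)$ by $N^i$, computes $\CH^p(A,q)\otimes\Q = H^{2p-q}_{\mathcal{M}}(A,\Q(p))$ piecewise with nonzero eigenvalues. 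The paper's route is shorter; yours cleanly separates the formal part (injectivity of $[N]^*$) from the deep part (surjectivity), and makes explicit that bijectivity of $[N]^*$ is the only non-formal input.

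One step in your last paragraph is false as stated, though harmlessly so. You claim $T_a^*$ preserves the Beauville grading ``since translations induce the identity on $H^*$''; acting trivially on cohomology does not imply preserving a Chow-theoretic grading, and the claim is actually wrong for non-torsion $a$: on an elliptic curve, the class $[0]$ of the origin satisfies $[n]^*[0]=n^2[0]$ rationally, yet $T_a^*[0]=[-a]=[0]+([-a]-[0])$ with $[-a]-[0]\in\mathrm{Pic}^0(A)\otimes\Q$ (the eigenspace with eigenvalue $n$) nonzero when $a$ is non-torsion. So translations preserve only the induced filtration, not the grading; triviality on the grading for torsion $a$ is essentially the statement being proved, so this justification is circular. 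Fortunately you do not need the claim at all: as you set up in the preceding paragraph, once $[N]^*$ is surjective, the identity $(T_a^*-\id)\circ[N]^*=0$ immediately gives $T_a^*=\id$ on all of $\CH^p(A,q)\otimes\Q$. Delete the grading-commutation sentence and your proof stands.
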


\begin{proof}
We use the Fourier-Mukai transform for $A$.
Let $\hat{A}$ denote the dual abelian variety,  $\mathcal{P}$ be the Poincar\'e line bundle on $A\times \hat{A}$, and $p_1,p_2$ be 1st and 2nd projection from $A\times \hat{A}$.
We denote $\bigoplus_{p}\CH^p(X,q)$ by $\CH(X,q)$.
Then the Fourier-Mukai transform $F_{\mathcal P}\colon \CH(A,j)\otimes \Q\to  \CH(\hat{A},j)\otimes \Q$ is defined by $F_{\mathcal P}(\xi) = (p_2)_*(\exp([\mathcal{P}])p_1^*(\xi))$ where $\exp([\mathcal{P}]) = 1+\frac{[\mathcal{P}]}{1!}+\frac{[\mathcal{P}]^2}{2!}+\cdots$.
Since $F_{\mathcal P}$ has inverse, this is an isomorphism.
Then for $a\in A$, the translation $T_a$ satisfies
\begin{equation*}
F_{\mathcal P}\circ T_a^*(\xi) = \exp([\mathcal{P}|_{\{a\}\times \hat{A}}])F_{\mathcal{P}}(\xi).
\end{equation*}
by \cite[Proposition 4(ii)]{beau}.
Since $A\to \mathrm{Pic}(\hat{A}); a\mapsto [\mathcal{P}|_{\{a\}\times \hat{A}}]$ is an isomorphism of groups and $a$ is a torsion, $\exp([\mathcal{P}|_{\{a\}\times \hat{A}}])=1$ in $\CH(\hat{A},q)\otimes\Q$.
This shows that $T_a^* = \id$ on $\CH(X,q)\otimes \Q$.
\end{proof}

\section{Conditional results}
In this section, we give a conditional results on Conjecture \ref{main}, assuming some general conjectures on motives.

\subsection{Consequence from the conservativity conjecture}
First, we briefly review basic results on mixed motives following \cite{ayoub}.

Let $\mathbf{Chow}(\C;\Q)$ be the category of Chow motives over $\C$ with coefficients in $\Q$ and $\mathbf{DM}_{\mathrm{gm}}(\C;\Q)$ be the Voevodsky's category of geometric motives over $\C$ with coefficients in $\Q$.
It is known that $\mathbf{DM}_{\mathrm{gm}}(\C;\Q)$ is pseudo-abelian, i.e., for each idempotent map, its kernel and cokernel exist.
We have a fully faithful embedding 
\begin{equation*}
\mathbf{Chow}(\C;\Q)\hookrightarrow \mathbf{DM}_{\mathrm{gm}}(\C;\Q)
\end{equation*}
For an object $M\in \mathbf{DM}_{\mathrm{gm}}(\C;\Q)$ and $p,q\in \Z_{\ge 0}$, the motivic cohomology is defined by\footnote{Note that $\mathbf{DM}_{\mathrm{gm}}(\C;\Q)$ is a full subcategory of the category $\mathbf{DM}(\C;\Q)$ of mixed motives over $\C$ with coefficients in $\Q$.} 
\begin{equation*}
H^p_{\mathcal{M}}(M,\Q(q)) = \mathrm{Hom}_{\mathbf{DM}_{\mathrm{gm}}(\C;\Q)}(M,\Q(q)[p]).
\end{equation*}
In particular, for a smooth variety $X$, the motivic cohomology $H^p_{\mathcal{M}}(X,\Q(q))$  is defined by $\mathrm{Hom}_{ \mathbf{DM}_{\mathrm{gm}}(\C;\Q)}(M(X),\Q(q)(p))$, where $M(X)$ be the motive associated to $X$.
Furthermore, we have the canonical isomorphism $\CH^p(X,2q-p)\otimes \Q \simeq H^p_{\mathcal{M}}(X,\Q(q))$ where $\CH^p(X,q)$ is the Bloch's higher Chow group.
In particular, in the case $(p,q)=(3,2)$, we have the isomorphism
\begin{equation}\label{Chowmot}
\CH^2(X,1)\otimes\Q \simeq H^3_{\mathcal{M}}(X,\Q(2)).
\end{equation}

Let $\mathbf{D}(\Q)$ be the derived category of $\Q$-vector spaces and $H_B:\mathbf{DM}_{\mathrm{gm}}(\C;\Q) \to \mathbf{D}(\Q)$ be the Betti realization, i.e., the functor between triangulated categories defined by sending $M(X)$ to the singular chain complex $S^\bullet (X^\an;\Q)$.
Ayoub proposed the following ``conservativity conjecutre".
\begin{conje}\label{conservative}
\cite[Conjecture 2.1]{ayoub}
The functor $H_B$ is conservative.
In other words, if a morphism $f\colon M\to N$ in $\mathbf{DM}_{\mathrm{gm}}(\C;\Q)$ satisfies that $H_B(f)$ is isomorphism, then $f$ itself is an isomorphism.
\end{conje}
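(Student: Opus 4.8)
The statement is one of the central open conjectures in the theory of motives, so any honest line of attack must be conditional on the standard conjectures and on the existence of a motivic $t$-structure; the plan is therefore to describe the natural reduction rather than to claim an unconditional argument. The first move is to exploit Bondarko's weight structure on $\mathbf{DM}_{\mathrm{gm}}(\C;\Q)$, whose heart is precisely $\mathbf{Chow}(\C;\Q)$. Every geometric motive $M$ admits a finite weight filtration, and the associated weight complex packages $M$ into a bounded complex of Chow motives in $K^b(\mathbf{Chow}(\C;\Q))$. Since $H_B$ is a realization compatible with this weight structure, the idea is to reduce conservativity for a morphism $f\colon M\to N$ with $H_B(f)$ an isomorphism to a statement about the induced maps on the weight-graded pieces, which are pure motives. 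As an inductive base case one may input the fact that conservativity is already known on the triangulated subcategory generated by motives of curves and of abelian varieties, so the essential content lies in the higher-weight and higher-codimension pieces.

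The second step concentrates on pure motives. On $\mathbf{Chow}(\C;\Q)$ the Betti realization factors through homological motives, and if one grants the standard conjecture $D$ (homological equivalence agrees with numerical equivalence) together with Jannsen's theorem that numerical motives form a semisimple abelian category, then $H_B$ becomes conservative on the heart: a nonzero numerical motive has a nonzero component in some singular cohomology group, so a morphism of pure motives inducing an isomorphism on cohomology is itself an isomorphism. Thus the conjecture holds ``weight by weight'' under the standard conjectures.

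The third and decisive step is to lift this graded statement back to the triangulated category, and this is where the main obstacle lies. Knowing that $H_B(f)$ is an isomorphism and that $f$ induces isomorphisms on all weight-graded pieces does not by itself force $f$ to be an isomorphism, because the weight complex functor is only weakly functorial and can fail to record the gluing data encoded in the differentials of the weight complex. To control this one needs a conservative enrichment of the weight complex, equivalently a motivic $t$-structure whose heart realizes Beilinson's conjectural abelian category of mixed motives, together with the compatibility of $H_B$ with that $t$-structure; granting these, one reassembles $f$ from its realization via a standard five-lemma argument along the $t$-structure filtration.

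In summary, the hardest part is precisely this reassembly. The pure, weight-graded version of conservativity is accessible modulo the standard conjectures, but promoting it to the full triangulated statement requires the construction of a motivic $t$-structure with a faithful Betti realization, which remains out of reach and is known to be closely intertwined with the standard conjectures themselves. This is exactly why the statement can only be invoked conditionally, and why the consequences for Conjecture \ref{main} in the finite-order case must be phrased as conditional results.
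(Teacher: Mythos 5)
There is nothing to compare here: the statement is Ayoub's conservativity conjecture, quoted verbatim from \cite[Conjecture 2.1]{ayoub}, and the paper contains no proof of it --- it is a wide-open conjecture that the paper only ever \emph{assumes}, namely as the hypothesis of Proposition \ref{condproof}. You correctly recognized this and rightly declined to claim a proof; in that sense your submission is the appropriate response, and the honest framing in your final paragraph matches exactly how the paper uses the statement.

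As for the conditional roadmap you sketch, it is broadly consistent with the literature (Bondarko's weight structure with heart $\mathbf{Chow}(\C;\Q)$, conservativity on motives of abelian type, reduction of the pure case to the standard conjectures), but one step is stated too optimistically. In your second step, standard conjecture $D$ plus Jannsen's semisimplicity only give conservativity of the realization \emph{on numerical motives}: from $H_B(f)$ an isomorphism you may conclude that the image of $f$ under $\mathbf{Chow}(\C;\Q)\to \mathbf{Mot}_{\mathrm{num}}(\C;\Q)$ is an isomorphism, but not that $f$ itself is one. The kernel of this projection is an ideal that is not known to be nilpotent; e.g.\ for $f=\mathrm{id}+\varepsilon$ with $\varepsilon$ homologically trivial, $H_B(f)$ is the identity, yet invertibility of $f$ requires (something like) nilpotence of $\varepsilon$. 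So even the ``weight by weight'' statement needs an additional input --- Voevodsky's nilpotence conjecture or Kimura--O'Sullivan finite-dimensionality, which is known for motives of abelian type (whence the known cases you cite) but open already for general surfaces, in particular for the motives $t_2(X)$ of $K3$ surfaces that this paper cares about. Your third step (the absence of a motivic $t$-structure to reassemble $f$ from its weight-graded pieces) is indeed a genuine obstruction, but it is the \emph{second} missing ingredient, not the only one.
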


Next, we recall crucial results on motives of surfaces.
For a smooth projective surface $X$ over $\C$, we denote the $\Q$-linear subspace of $H^2(X,\Q)$ generated by algebraic cycles by $\mathrm{NS}(X)_\Q$, and it orthogonal complement (with respect to the cup product) by $T(X)_\Q$.
We have the following ``refined Chow K\"unneth decomposition" in the category $\mathbf{Chow}(\C;\Q)$.
\begin{thm}
\cite[Proposition 7.2.3, Theorem 7.3.10]{KMP} For a smooth projective surface $X$ over $\C$, the Chow motive $h(X)$ admits the splitting
\begin{equation*}
\begin{aligned}
&h(X) = h^0(X)\oplus h^1(X)\oplus h^2(X) \oplus h^3(X) \oplus h^4(X)\\
&h^2(X) = h^2_{\mathrm{alg}}(X)\oplus t_2(X)
\end{aligned}
\end{equation*}
where the Betti-realization of $h^i(X)$ is $H^i(X^\an,\Q)$, and the Betti-realization of $h^2_{\mathrm{alg}}(X)$ and $t_2(X)$ are $\mathrm{NS}(X)_\Q$ and $T(X)_\Q$, respectively.
Furthermore, any isomorphism between smooth projective surfaces preserves the above decomposition.
\end{thm}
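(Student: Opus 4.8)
The statement is the refined Chow--K\"unneth decomposition of \cite{KMP}, and I will sketch the argument one would give, building everything as mutually orthogonal idempotent correspondences in $\CH^2(X\times X)\otimes \Q$. \emph{Step 1 (Chow--K\"unneth decomposition).} The goal is to produce orthogonal idempotents $\pi_0,\pi_1,\pi_2,\pi_3,\pi_4\in \CH^2(X\times X)\otimes\Q$ with $\sum_i \pi_i = [\Delta_X]$ and with $\pi_i$ realizing to the projector onto $H^i(X^\an,\Q)$; one then sets $h^i(X):=(X,\pi_i)$. Fixing a closed point $x_0\in X$, one takes $\pi_0=[x_0\times X]$ and $\pi_4=[X\times x_0]$, which are visibly orthogonal idempotents realizing to $H^0$ and $H^4$. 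The odd projectors $\pi_1,\pi_3$ are constructed from the Picard and Albanese varieties of $X$ (equivalently, from a curve correspondence attached to a general smooth ample curve $C\subset X$): one produces $\pi_1$ realizing to $H^1(X^\an,\Q)$ and lets $\pi_3$ be its transpose. Finally $\pi_2:=[\Delta_X]-\pi_0-\pi_1-\pi_3-\pi_4$ realizes to $H^2$.

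\emph{Step 2 (refinement of $h^2$).} By the Hodge index theorem the intersection form on $\NS(X)_\Q$ is non-degenerate. Choosing a $\Q$-basis $D_1,\dots,D_\rho$ of $\NS(X)_\Q$, letting $G=(D_i\cdot D_j)$ be its invertible Gram matrix and $(a_{ij})=G^{-1}$, I set
\begin{equation*}
\pi^2_{\mathrm{alg}} := \sum_{i,j} a_{ij}\,[D_i\times D_j]\in \CH^2(X\times X)\otimes \Q.
\end{equation*}
Using the composition rule $[D_i\times D_j]\circ[D_k\times D_\ell]=(D_j\cdot D_k)\,[D_i\times D_\ell]$ one checks $\pi^2_{\mathrm{alg}}\circ\pi^2_{\mathrm{alg}}=\pi^2_{\mathrm{alg}}$ (this is exactly the identity $G^{-1}GG^{-1}=G^{-1}$), that $\pi^2_{\mathrm{alg}}$ is orthogonal to $\pi_0,\pi_1,\pi_3,\pi_4$, and that it is absorbed by $\pi_2$, i.e. $\pi_2\circ\pi^2_{\mathrm{alg}}=\pi^2_{\mathrm{alg}}\circ\pi_2=\pi^2_{\mathrm{alg}}$. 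Its realization is the orthogonal projector of $H^2$ onto $\NS(X)_\Q$. Setting $\pi_t:=\pi_2-\pi^2_{\mathrm{alg}}$ yields an idempotent realizing to $T(X)_\Q$, and I define $h^2_{\mathrm{alg}}(X):=(X,\pi^2_{\mathrm{alg}})$ and $t_2(X):=(X,\pi_t)$.

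\emph{Step 3 (functoriality).} For an isomorphism $f\colon X\xrightarrow{\sim}Y$, the graph $\Gamma_f$ induces an isomorphism $h(X)\xrightarrow{\sim}h(Y)$. Since $f$ sends divisors to divisors and preserves intersection numbers, it matches a basis of $\NS(X)_\Q$ with one of $\NS(Y)_\Q$ having the same Gram matrix, so conjugation by $\Gamma_f$ carries $\pi^2_{\mathrm{alg}}$ on $X$ to that on $Y$; the even projectors $\pi_0,\pi_2,\pi_4$ and the odd ones $\pi_1,\pi_3$ are matched because each is determined, up to the ambiguity of the decomposition, by its realization, and $t_2$ is intrinsically the summand of $h^2$ whose cohomology carries no algebraic class.

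The main obstacle is Step 1, specifically the construction of the odd projectors $\pi_1,\pi_3$ as genuine \emph{orthogonal idempotents in the Chow ring}, not merely modulo homological equivalence; this is the technical heart of Murre's theorem and relies on the fine structure of the Albanese and Picard correspondences. A secondary subtlety, again at the Chow rather than the homological level, is verifying in Step 2 that $\pi^2_{\mathrm{alg}}$ is orthogonal to the remaining projectors and absorbed by $\pi_2$. Once these integrality-of-idempotents points are settled, the realization computations and the functoriality of Step 3 are essentially formal.
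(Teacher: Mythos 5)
This theorem is not proved in the paper at all: it is imported verbatim, with the citation \cite[Proposition 7.2.3, Theorem 7.3.10]{KMP}, and used as a black box, so there is no in-paper proof to compare against; the benchmark is the construction of Murre and Kahn--Murre--Pedrini themselves. Measured against that, your Steps 1 and 2 are a faithful outline of the actual argument: $\pi_0=[x_0\times X]$, $\pi_4=[X\times x_0]$, the odd projectors via the Picard/Albanese correspondences with $\pi_3$ the transpose of $\pi_1$, $\pi_2$ the complement, and the splitting of $\pi_2$ by the Gram-matrix idempotent $\pi^2_{\mathrm{alg}}$ attached to a $\Q$-basis of $\NS(X)_\Q$. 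You also correctly locate the two technical pressure points (idempotency and orthogonality as \emph{Chow} cycles, not merely modulo homological equivalence), and deferring those to Murre/KMP is legitimate when sketching a cited result.

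The genuine gap is Step 3. The mechanism you invoke --- that the projectors ``are matched because each is determined, up to the ambiguity of the decomposition, by its realization'' --- is false in $\mathbf{Chow}(\C;\Q)$: the Betti realization does not determine a projector up to rational equivalence. Concretely, already $\pi_0=[x_0\times X]$ depends on the choice of $x_0$: for a surface with $p_g>0$ two general points satisfy $[x_0]\neq[x_1]$ in $\CH_0(X)\otimes\Q$ (Mumford), and the transpose action $\beta\mapsto \deg(\beta)\,[x_i]$ on $\CH_0(X)$ distinguishes $[x_0\times X]$ from $[x_1\times X]$, even though both realize to the same K\"unneth projector onto $H^0$. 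Likewise, ``$t_2$ is intrinsically the summand of $h^2$ whose cohomology carries no algebraic class'' is a description of the realization, not a motivic characterization, so it cannot be used to match summands across an isomorphism $f\colon X\xrightarrow{\sim}Y$. Handling exactly this non-uniqueness is the content of \cite[Theorem 7.3.10]{KMP}: one computes $\Hom(t_2(X),t_2(Y))$ as a group of correspondences modulo an ideal of degenerate correspondences, shows that changing the refined Chow--K\"unneth projectors alters the induced maps only by elements of that ideal, and concludes that $X\mapsto t_2(X)$ is functorial (in particular preserved by $\Gamma_f$) and well defined up to isomorphism. Your Steps 1--2 with this replacement of Step 3 would reproduce the cited proof; as written, Step 3 does not go through.
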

The component $t_2(X)$ is called \textit{transcendental part of Chow motives}.
As we mentioned in the introduction, the relation between $t_2(X)$ and $\CH^2(X,1)_\ind$ is the following.

\begin{thm}\cite[Theorem 2]{Kah}\label{kahisom}
There exists the following canonical isomorphism.
\begin{equation*}
\CH^2(X,1)_\ind \otimes \Q \simeq H^3_{\mathcal{M}}(t_2(X),\Q(2))
\end{equation*}
\end{thm}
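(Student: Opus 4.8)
The plan is to derive the isomorphism from the refined Chow--Künneth decomposition of $h(X)$ together with the identification \eqref{Chowmot}. First I would rewrite $\CH^2(X,1)\otimes\Q\simeq H^3_{\mathcal{M}}(X,\Q(2)) = \Hom_{\mathbf{DM}_{\mathrm{gm}}(\C;\Q)}(M(X),\Q(2)[3])$ and use that this $\Hom$-functor is additive in the first variable. Applying it to the decomposition $M(X)=\bigoplus_i h^i(X)$ and $h^2(X)=h^2_{\mathrm{alg}}(X)\oplus t_2(X)$ yields a direct sum splitting
\[
\CH^2(X,1)\otimes\Q \simeq \bigoplus_{i\ne 2} H^3_{\mathcal{M}}(h^i(X),\Q(2))\,\oplus\, H^3_{\mathcal{M}}(h^2_{\mathrm{alg}}(X),\Q(2))\,\oplus\, H^3_{\mathcal{M}}(t_2(X),\Q(2)).
\]
The candidate isomorphism is the projection onto the last summand, i.e. the action of the idempotent correspondence $\pi_{t_2}$ cutting out $t_2(X)$; it is canonical precisely because the whole decomposition is functorial. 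Everything then reduces to identifying the direct sum of the remaining summands with the decomposable part $\CH^2(X,1)_{\dec}\otimes\Q$.

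Next I would compute the remaining summands. The extreme pieces $h^0(X)$ and $h^4(X)$ are Tate motives, and their contributions are motivic cohomology groups of $\Spec\C$ in a degree exceeding the weight, hence they vanish. The odd pieces $h^1(X),h^3(X)$ are summands of (twists of) the motive of a curve, so their contributions are controlled by higher Chow groups $\CH^2(C,1)$ of curves, which by the projective-bundle and localization formulas are generated by products $\mathrm{Pic}(C)\otimes\C^\times$; these account for the $\mathrm{Pic}^0$-part of the decomposable cycles. The algebraic even piece $h^2_{\mathrm{alg}}(X)$ is of Tate type, a sum of $\rank \NS(X)$ copies of a single Tate motive, and its contribution is the Néron--Severi decomposables $\NS(X)_\Q\otimes(\C^\times\otimes\Q)$.

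Finally I would match the two descriptions of the decomposable part. One inclusion is formal: $\CH^2(X,1)_{\dec}$ is the image of $H^2_{\mathcal{M}}(X,\Q(1))\otimes H^1_{\mathcal{M}}(X,\Q(1))$, where $H^1_{\mathcal{M}}(X,\Q(1)) = \C^\times\otimes\Q$ is supported on $h^0(X)$ and $H^2_{\mathcal{M}}(X,\Q(1)) = \mathrm{Pic}(X)\otimes\Q$ is supported on $h^1(X)\oplus h^2_{\mathrm{alg}}(X)$; since cup product respects the tensor decomposition and $h^0(X)\otimes h^i(X)\simeq h^i(X)$, every decomposable class lands in the summands indexed by $h^1(X)$ and $h^2_{\mathrm{alg}}(X)$ and is in particular annihilated by $\pi_{t_2}$. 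Thus the projection descends to a map $\CH^2(X,1)_{\ind}\otimes\Q\to H^3_{\mathcal{M}}(t_2(X),\Q(2))$. The hard part will be the reverse inclusion, namely that the entire complement of the $t_2(X)$-summand consists of decomposable cycles (and in particular that the $h^3(X)$-contribution is redundant): this requires the explicit curve and point computations above together with the surjectivity of the relevant cup products, and it is here that the substance of \cite[Theorem 2]{Kah} lies. Granting it, the kernel of the projection is exactly $\CH^2(X,1)_{\dec}\otimes\Q$, so the induced map on the indecomposable quotient is the desired isomorphism.
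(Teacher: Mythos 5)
First, a point of comparison: the paper does not prove this statement at all — Theorem \ref{kahisom} is imported verbatim from \cite[Theorem 2]{Kah} — so your proposal must be judged as a reconstruction of Kahn's theorem, not against an internal argument. Your strategy (decompose $H^3_{\mathcal{M}}(h(X),\Q(2))$ by additivity of $\Hom_{\mathbf{DM}_{\mathrm{gm}}(\C;\Q)}(-,\Q(2)[3])$ over the refined Chow--K\"unneth decomposition, kill the extreme pieces, and identify the complement of the $t_2(X)$-summand with $\CH^2(X,1)_\dec\otimes\Q$) is the natural one and is, in outline, how a statement of this kind is proved. Your easy vanishings are essentially right: $H^3_{\mathcal{M}}(h^0(X),\Q(2))=H^3_{\mathcal{M}}(\Spec\C,\Q(2))=0$ because motivic cohomology of a field vanishes in degrees above the twist, and $H^3_{\mathcal{M}}(h^4(X),\Q(2))=H^{-1}_{\mathcal{M}}(\Spec\C,\Q(0))=0$ (note the correct reason here is negativity of the degree, not ``degree exceeding weight''). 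Likewise, the claim that decomposables die in the $t_2$-summand is correct, granted the multiplicativity properties of the refined decomposition and the vanishing $\Hom(t_2(X),\Q(1)[2])=0$, both of which are genuinely in \cite{KMP} and should be cited rather than waved at.

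The genuine gap is exactly the step you yourself defer with ``granting it'': that every summand other than $H^3_{\mathcal{M}}(t_2(X),\Q(2))$ consists of decomposable cycles. That is the whole content of the theorem, and the reasons you sketch do not yet supply it. Three arguments are missing. (i) For a smooth projective curve $C$, the fact that $\CH^2(C,1)$ is generated by classes $(p,\alpha)=[p]\cdot\alpha$ does not follow from the projective bundle formula; it follows from localization together with $H^3_{\mathcal{M}}(\Spec\C(C),\Q(2))=0$ (again degree above twist for a field). (ii) To transport this to $X$ you need weak Lefschetz for motives — $h^1(X)$ is a direct summand of $h^1(C)$ and $h^3(X)$ of $h^1(C)(1)[2]$ for a smooth ample curve $C\subset X$ — together with the fact that correspondences send classes of the form $[D]\cdot\alpha$ (with $\alpha\in\C^\times$ constant) to classes of the same form, which is a projection-formula argument. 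This yields $H^3_{\mathcal{M}}(h^1(X),\Q(2))\subset\CH^2(X,1)_\dec\otimes\Q$ and, separately, $H^3_{\mathcal{M}}(h^3(X),\Q(2))=0$, since $\Hom(h^1(C),\Q(1)[1])$ is the $h^1$-part of $\CH^1(C,1)_\Q=\C^\times_\Q$, all of which is carried by $h^0$; your proposal lumps $h^1$ and $h^3$ together, obscuring that one contributes the $\mathrm{Pic}^0$-decomposables and the other contributes nothing. (iii) For $h^2_{\mathrm{alg}}(X)\simeq\NS(X)_\Q(1)[2]$ you need surjectivity of $\NS(X)\otimes\C^\times\to\Hom(\NS(X)_\Q(1)[2],\Q(2)[3])\simeq\NS(X)_\Q^\vee\otimes\C^\times_\Q$, which holds because this map is induced by the intersection pairing, nondegenerate on $\NS(X)_\Q$. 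A further caveat: canonicity does not come for free from ``functoriality of the decomposition'' — the individual Chow--K\"unneth projectors are not canonical, and it is a theorem of \cite{KMP} that $t_2(X)$ and the induced maps are independent of the choices. Finally, for the only case the paper uses — $X$ a $K3$ surface — one has $h^1(X)=h^3(X)=0$ and $\mathrm{Pic}(X)=\NS(X)$, so the problematic steps (i)--(ii) are vacuous and the proof collapses to (iii) plus your vanishing computations; noting this would make your argument complete at least in the case that matters here.
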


By assuming the conservativity conjecture, we can show the following conditional results on Conjecture \ref{main}.

\begin{prop}\label{condproof}
Assume Conjecture \ref{conservative} holds.
Then, Conjecture \ref{main} holds for a $K3$ surface $X$ such that $\Aut_s(X)$ is generated by finite orders.
\end{prop}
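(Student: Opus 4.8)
The plan is to reduce the statement to the action on the transcendental motive $t_2(X)$ and then to invoke conservativity. Since $\Aut_s(X)$ is generated by elements of finite order, it suffices to show that every symplectic automorphism $f$ of finite order $n$ acts trivially on $\CH^2(X,1)_\ind\otimes\Q$. Because $f$ is an automorphism, by the functoriality in the refined Chow--K\"unneth decomposition of \cite{KMP} the induced map $f^*$ respects the splitting of $h(X)$ and restricts to an automorphism $\alpha := f^*|_{t_2(X)}$ of $t_2(X)$ in $\mathbf{DM}_{\mathrm{gm}}(\C;\Q)$, with $\alpha^n = \id$. Under the canonical isomorphism of Theorem \ref{kahisom}, the $f^*$-action on $\CH^2(X,1)_\ind\otimes\Q \simeq H^3_\M(t_2(X),\Q(2)) = \Hom_{\mathbf{DM}_{\mathrm{gm}}(\C;\Q)}(t_2(X),\Q(2)[3])$ is given by precomposition with $\alpha$, so the whole proposition reduces to proving $\alpha = \id$.

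The first input is Hodge-theoretic and unconditional. Applying the Betti realization $H_B$, the endomorphism $\alpha$ realizes to the action of $f^*$ on $T(X)_\Q$, and I would argue that this action is trivial, as is classical (Nikulin). Indeed, $f^*$ is a finite-order Hodge isometry of $T(X)_\Q$; since $f$ is symplectic it fixes the class $\omega$ of a nonzero holomorphic $2$-form, and being a real operator it also fixes $\overline{\omega}$, so $H^{2,0}(X)\oplus H^{0,2}(X)$ lies in the invariant subspace $T(X)_\C^{f^*}$. The orthogonal complement of the fixed Hodge substructure $T(X)_\Q^{f^*}$ in $T(X)_\Q$ is again a rational Hodge substructure whose complexification is then purely of type $(1,1)$; such a structure consists of Hodge classes, and in $H^2(X,\Q)$ these all lie in $\NS(X)_\Q$. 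As $T(X)_\Q\cap \NS(X)_\Q = 0$, this complement vanishes, so $f^*$ acts as the identity on $T(X)_\Q$, i.e.\ $H_B(\alpha)=\id$.

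To upgrade this to $\alpha=\id$ in $\mathbf{DM}_{\mathrm{gm}}(\C;\Q)$ I would combine conservativity with the finite order of $\alpha$. I form the averaging operator $e := \tfrac{1}{n}\sum_{i=0}^{n-1}\alpha^i \in \mathrm{End}(t_2(X))$; since $\alpha^n=\id$ this is an idempotent with $\alpha e = e$, so $\alpha$ acts as the identity on $\mathrm{Im}(e)$. As $\mathbf{DM}_{\mathrm{gm}}(\C;\Q)$ is pseudo-abelian, $e$ splits off a complementary summand $M := \mathrm{Im}(\id - e)$, and it suffices to show $M=0$. Realizing, $H_B(e)$ is the averaging idempotent for the $f^*$-action on $T(X)_\Q$, which equals the identity by the previous paragraph, whence $H_B(\id-e)=0$ and $H_B(M)=0$. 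Now conservativity detects zero objects: the morphism $0\to M$ realizes to an isomorphism of zero objects, so by Conjecture \ref{conservative} it is itself an isomorphism and $M\cong 0$. Therefore $\id - e = 0$, hence $e=\id$ and $\alpha = \alpha e = e = \id$. Chaining the finitely many generators of $\Aut_s(X)$ then yields triviality of the whole action.

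The steps demanding the most care are the compatibility of the isomorphism of Theorem \ref{kahisom} with the $\Aut(X)$-action, so that the induced operator on $\CH^2(X,1)_\ind\otimes\Q$ is genuinely precomposition with $\alpha$, and the precise form in which conservativity is applied: I use that an exact, conservative functor between triangulated categories detects zero objects, implemented here through the morphism $0\to M$. The Hodge-theoretic triviality on $T(X)_\Q$ is classical, so the genuinely conditional content---and the place where Conjecture \ref{conservative} is indispensable---is exactly the lift from the realization to the motive via the idempotent $e$.
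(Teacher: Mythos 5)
Your proof is correct and follows essentially the same route as the paper: reduce via Theorem \ref{kahisom} to the action on $t_2(X)$, form the averaging idempotent in the pseudo-abelian category $\mathbf{DM}_{\mathrm{gm}}(\C;\Q)$, use the classical triviality of the symplectic action on $T(X)_\Q$, and conclude by conservativity. The only cosmetic differences are that you prove the Hodge-theoretic input (which the paper cites from \cite{HuyK3}) and apply conservativity to $0\to M$ with $M=\mathrm{Im}(\id-e)$, whereas the paper applies it to the inclusion $t_2(X)^\rho\to t_2(X)$; these are equivalent.
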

\begin{proof}
It is enough to show that for any elements $\rho\in \Aut_s(X)$ of a finite order, $\rho$ acts trivially on $\CH^2(X,1)_\ind\otimes \Q$.
Using the isomorphism in Theorem \ref{kahisom} and by the definition of the motivic cohomology, it is enough to show that $\rho$ acts trivially on the Chow motive $t_2(X)$.

Let $m$ be the order of $\rho$.
Then the endomorphism $(\mathrm{id}+\rho^*+(\rho^2)^*+\cdots + (\rho^{m-1})^*)/m\colon t_2(X)\to t_2(X)$ is idempotent.
Thus the $\rho$-invariant part $t_2(X)^\rho$ of $t_2(X)$ is defined in $\mathbf{DM}_{\mathrm{gm}}(\C;\Q)$.
The Betti realization of the natural morphism $t_2(X)^\rho \to t_2(X)$ is $T(X)^{\rho^*}_\Q\hookrightarrow T(X)_\Q$.
Since $\rho$ is an symplectic automorphism, $\rho^*$ acts trivially on the transcendental lattice $T(X)$ (\cite[p.~ 330]{HuyK3}), $T(X)^{\rho^*}_\Q\hookrightarrow T(X)_\Q$ is an isomorphism.
Therefore, by Conjecture \ref{conservative}, $t_2(X)^\rho\to t_2(X)$ is isomorphism, i.e., $\rho$ acts trivially on $t_2(X)$.
\end{proof}

\subsection{Relation with injectivity of the Regulator map}
In this section, we explain that the injectivity of the regulator map implies Cojecture \ref{main}.
For a $K3$ surface $X$, the following \textit{regulator map} plays an important role in the study of $\CH^2(X,1)_\ind$.
\begin{equation}\label{regulator}
\CH^2(X,1) \to H^3_{\mathcal{D}}(X,\Z(2))= \dfrac{H^2(X,\C)}{F^2H^2(X,\C)+H^2(X,\Z(2))} = J(H^2(X,\Z))
\end{equation}
Here the target is a generalized complex torus, i.e., a quotient of a $\C$-vector space by a  non-saturated discrete lattice.
By the explicit formula on the regulator map, the restriction of \eqref{regulator} to the decomposable part is given by
\begin{equation}\label{decreg}
\CH^2(X,1)_\dec \to \mathrm{NS}(X)\otimes (\C/\Z(1)); \quad (C,\alpha)\mapsto [C]\otimes \log\alpha.
\end{equation}
Since $\mathrm{Pic}(X)=\mathrm{NS}(X)$ for $K3$ surfaces, \eqref{decreg} is isomorphism.
Let $T(X)^\vee = \mathrm{Hom}_\Z(T(X),\Z)$ be the dual lattice of the transcendental lattice $T(X)$.
We can regard $T(X)^\vee$ as a Hodge structure of weight 2.
By the unimodularity of $H^2(X,\Z)$, we have the map $H^2(X,\Z) \xrightarrow{\:\sim \:} H^2(X,\Z)^\vee \to T(X)^\vee$.
Since $T(X)$ and $\mathrm{NS}(X)$ are primitive lattices of $H^2(X,\Z)$ and orthogonal to each other, this map is surjective.
Thus, this morphism induces  the following exact sequence of Hodge structures.
\begin{equation}\label{morHS}
\begin{tikzcd}
0\arrow[r] & \mathrm{NS}(X) \arrow[r] & H^2(X,\Z) \arrow[r] &  T(X)^\vee \arrow[r] & 0.
\end{tikzcd}
\end{equation}
By \eqref{decreg} and \eqref{morHS}, the regulator map \eqref{regulator} induces the map
\begin{equation}\label{transreg}
r\colon \CH^2(X,1)_\ind \to \dfrac{T(X)^\vee_\C}{F^2T(X)^\vee_\C+T(X)^\vee} = J(T(X)^\vee).
\end{equation}
The map \eqref{transreg} is called the \text{transcendental regulator map}, and used for detecting indecomposable cycles.
Using the notations above, we prove the following.

\begin{prop}\label{BH}
Let $X$ be a $K3$ surface such that the map 
\begin{equation}\label{transregq}
r\otimes \id\colon\CH^2(X,1)_\ind\otimes \Q \to J(T(X)^\vee)\otimes \Q 
\end{equation}
is injective.
Then Conjecture \ref{main} holds for $X$.
\end{prop}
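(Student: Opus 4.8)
The plan is to exploit the naturality of the transcendental regulator map with respect to the action of symplectic automorphisms, combined with the injectivity hypothesis \eqref{transregq}. The key observation is that the target $J(T(X)^\vee)$ is built entirely from the transcendental lattice $T(X)$ and its Hodge structure, and a symplectic automorphism acts trivially on $T(X)$.

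First I would establish the naturality square. For any $\rho \in \Aut_s(X)$, the pullback $\rho^*$ acts on both source and target of the regulator map \eqref{regulator}, and by functoriality of Deligne cohomology the diagram relating $\CH^2(X,1)$ and $H^3_{\mathcal{D}}(X,\Z(2))$ commutes with the $\rho^*$-action. Passing to the indecomposable quotient, as set up in \eqref{transreg} via the exact sequence \eqref{morHS}, the transcendental regulator map $r$ fits into a commutative square
\begin{equation*}
\begin{tikzcd}
\CH^2(X,1)_\ind \arrow[r,"r"] \arrow[d,"\rho^*"] & J(T(X)^\vee) \arrow[d,"\rho^*"] \\
\CH^2(X,1)_\ind \arrow[r,"r"] & J(T(X)^\vee).
\end{tikzcd}
\end{equation*}
Here I need to check that $\rho^*$ is well-defined on the target, i.e.\ that it preserves the sublattice $\mathrm{NS}(X)$ inside $H^2(X,\Z)$ so that the induced action on the quotient $T(X)^\vee$ makes sense; this follows because $\rho^*$ is an isometry of $H^2(X,\Z)$ preserving the Hodge structure, hence preserves both $\mathrm{NS}(X)$ and its orthogonal complement.

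The crucial input is that a symplectic automorphism acts trivially on $T(X)$, and hence on $T(X)^\vee$ and on the intermediate Jacobian $J(T(X)^\vee)$. This is exactly the fact cited in the proof of Proposition \ref{condproof} (namely \cite[p.~330]{HuyK3}): since $\rho$ fixes the holomorphic $2$-form, $\rho^*$ acts as the identity on $T(X)$. Consequently the right vertical arrow in the square is the identity map. Tensoring the whole square with $\Q$, I obtain that $r\otimes \id$ intertwines $\rho^* \otimes \id$ with the identity, so for any $\xi_\ind \in \CH^2(X,1)_\ind \otimes \Q$ we have $(r\otimes\id)(\rho^*\xi_\ind - \xi_\ind) = 0$.

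Finally, the injectivity hypothesis on \eqref{transregq} forces $\rho^* \xi_\ind - \xi_\ind = 0$ in $\CH^2(X,1)_\ind \otimes \Q$, so $\rho^*$ acts as the identity, which is Conjecture \ref{main} for $X$. I expect the only genuine point requiring care to be the verification that $\rho^*$ descends compatibly through the exact sequence \eqref{morHS} to act on $J(T(X)^\vee)$ and that this descended action is compatible with the regulator map via the commutative square above; once that naturality is in place, the triviality of the $\rho$-action on $T(X)$ and the injectivity assumption combine immediately. There is no hard estimate or construction involved; the entire argument is a diagram-chase leveraging the two structural facts already recorded in the paper.
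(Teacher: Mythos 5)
Your proof is correct and follows essentially the same route as the paper: the paper's own argument is just a terser version, observing that symplectic automorphisms act trivially on $T(X)$ and hence on $J(T(X)^\vee)\otimes\Q$, so injectivity of $r\otimes\id$ forces the action on $\CH^2(X,1)_\ind\otimes\Q$ to be trivial. The naturality square and the descent of $\rho^*$ through \eqref{morHS}, which you verify explicitly, are left implicit in the paper, so your write-up is simply a more detailed account of the identical argument.
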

\begin{proof}
Since symplectic automorphisms act trivially on $T(X)$, they also act trivially on $J(T(X)^\vee)\otimes \Q$.
Thus, the injectivity of \eqref{transregq} implies that symplectic automorphisms act trivially on $\CH^2(X,1)_\ind \otimes \Q$.
\end{proof}

For a Zariski open subset $U$ of $X$, we have the cycle class map
\begin{equation*}
\CH^2(U,2)\otimes \Q\to \mathrm{Hom}_{\mathrm{MHS}}(\Q(0),H^2(U,\Q(2)))
\end{equation*}
where the target denotes the $\Q$-linear space of morphisms in the category of $\Q$-mixed Hodge structures.
By taking the direct limit by $U\subset X$, the cycle class map induces
\begin{equation}\label{cyclemap}
K_2^{M}(\C(X))\otimes \Q=\CH^2(\Spec \C(X),2)\otimes \Q \to \varinjlim_{U\subset X}\mathrm{Hom}_{\mathrm{MHS}}(\Q(0),H^2(U,\Q(2))).
\end{equation}
The surjectivity of \eqref{cyclemap} is a special case of the conjecture (S3) proposed in \cite{LR}, which is called \textit{amended Beilinson's Hodge conjecture} by them.
By \cite[Corollary 4.14]{LR}, the surjectivity of \eqref{cyclemap} is equivalent to the injectivity \eqref{transregq}.
Thus, we have the following.
\begin{cor}
If the amended Beilinson's Hodge conjecture holds for $X$, Conjecture \ref{main} holds for such $X$.
\end{cor}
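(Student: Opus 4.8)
The plan is to deduce the corollary formally from the two equivalences already recorded in this subsection together with Proposition \ref{BH}; there is no new computation to perform. First I would unwind the definitions. By the formulation of de Jeu and Lewis, the amended Beilinson's Hodge conjecture for $X$ is precisely the assertion that the cycle class map \eqref{cyclemap} is surjective. So assuming the conjecture for $X$ gives this surjectivity directly.

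Next I would invoke \cite[Corollary 4.14]{LR}, recalled immediately above the corollary: the surjectivity of \eqref{cyclemap} is equivalent to the injectivity of the rationalized transcendental regulator map
\begin{equation*}
r\otimes \id\colon \CH^2(X,1)_\ind\otimes \Q \to J(T(X)^\vee)\otimes \Q.
\end{equation*}
Chaining the two steps, the hypothesis of the corollary yields exactly the hypothesis of Proposition \ref{BH}, namely the injectivity of \eqref{transregq}.

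Finally, Proposition \ref{BH} states precisely that injectivity of \eqref{transregq} implies Conjecture \ref{main} for $X$, which finishes the argument. The point I would emphasize is that the corollary has no genuine analytic or geometric obstacle of its own: all of the substance has been packaged into \cite[Corollary 4.14]{LR} and into Proposition \ref{BH}. The conceptual heart, already carried out in the proof of Proposition \ref{BH}, is that symplectic automorphisms act trivially on the transcendental lattice $T(X)$ and hence on the target $J(T(X)^\vee)\otimes \Q$; the role of injectivity of $r\otimes\id$ is to transport this triviality on the Hodge-theoretic side back to the desired triviality of the action on $\CH^2(X,1)_\ind\otimes \Q$.
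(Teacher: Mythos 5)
Your proposal is correct and matches the paper's own (implicit) argument exactly: the amended Beilinson's Hodge conjecture gives surjectivity of the cycle class map \eqref{cyclemap}, which by \cite[Corollary 4.14]{LR} is equivalent to injectivity of \eqref{transregq}, and Proposition \ref{BH} then yields Conjecture \ref{main}. The paper treats this chain as self-evident from the preceding two sentences, which is precisely the formal deduction you spell out.
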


\section{Results on the torsion part}
On symplectic actions on the torsion part of $\CH^2(X,1)_\ind$, we have the following.
\begin{prop}\label{torprop}
Let $X$ be a complex $K3$ surface and $\Aut_s(X)$ be the symplectic automorphism group.
Then $\Aut_s(X)$ acts trivially on $(\CH^2(X,1)_\ind)_\mathrm{tor}$.
\end{prop}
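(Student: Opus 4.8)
The plan is to reduce the statement about torsion to a statement about the Brauer group, where the symplectic action is easier to control. By \cite[Theorem 1]{Kah}, there is a canonical isomorphism between the torsion part $(\CH^2(X,1)_\ind)_\mathrm{tor}$ and the torsion part of the Brauer group $\mathrm{Br}(X)$. First I would verify that this isomorphism is functorial with respect to the action of automorphisms, so that the $\Aut_s(X)$-action on $(\CH^2(X,1)_\ind)_\mathrm{tor}$ corresponds to the $\Aut_s(X)$-action on $\mathrm{Br}(X)_\mathrm{tor}$ (indeed, since $X$ is a $K3$ surface, $\mathrm{Br}(X)$ is entirely torsion).

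Next I would translate the action on the Brauer group into linear-algebraic data on the transcendental lattice. For a complex $K3$ surface there is a canonical description
\begin{equation*}
\mathrm{Br}(X) \simeq \mathrm{Hom}_\Z(T(X),\Q/\Z),
\end{equation*}
coming from the exponential sequence and the Hodge-theoretic structure of $H^2(X,\Z)$; equivalently $\mathrm{Br}(X)\simeq T(X)^\vee\otimes(\Q/\Z)$ after accounting for the weight-$2$ Hodge structure. Under this identification, the action of an automorphism $\rho$ on $\mathrm{Br}(X)$ is induced by the action of $\rho^*$ on $T(X)$. The key input is then the same fact used in Proposition \ref{condproof}: a symplectic automorphism acts trivially on the transcendental lattice $T(X)$ (see \cite[p.~330]{HuyK3}). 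Consequently $\rho^*$ acts as the identity on $\mathrm{Hom}_\Z(T(X),\Q/\Z)$, hence trivially on $\mathrm{Br}(X)_\mathrm{tor}$.

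Combining these two steps gives the result: transporting the trivial action on $\mathrm{Br}(X)$ back through the Kahn isomorphism shows that $\Aut_s(X)$ acts trivially on $(\CH^2(X,1)_\ind)_\mathrm{tor}$. The main obstacle I anticipate is the first step, namely checking that the isomorphism of \cite[Theorem 1]{Kah} is genuinely equivariant for the automorphism action rather than merely existing as abstract groups; this requires that the comparison be natural in $X$, which should follow from the functoriality of the cycle-class and regulator constructions underlying Kahn's theorem but must be confirmed. Once naturality is in hand, the remaining steps are purely a matter of identifying the Brauer-group action with the transcendental-lattice action and invoking the standard fact about symplectic automorphisms acting trivially on $T(X)$.
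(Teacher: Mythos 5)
Your proposal is correct and follows essentially the same route as the paper's own proof: Kahn's isomorphism $\mathrm{Br}(X)(1)\xrightarrow{\sim}(\CH^2(X,1)_\ind)_{\mathrm{tor}}$, the identification $\mathrm{Br}(X)\simeq T(X)^\vee\otimes(\Q/\Z)$ from van Geemen, and the triviality of the symplectic action on $T(X)$. Your extra care about the equivariance of Kahn's isomorphism is a reasonable point, but the paper treats this as part of the canonicity of the isomorphism and does not give a separate argument.
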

\begin{proof}
By \cite[Theorem 1]{Kah}, we have the following isomorphism 
\begin{equation}\label{Brauer}
\mathrm{Br}(X)(1) \xrightarrow{\:\sim\:} (\CH^2(X,1)_\ind)_{\mathrm{tor}} 
\end{equation}
where $\mathrm{Br}(X)(1) = \varinjlim_{n}{}_n\mathrm{Br}(X)\otimes \mu_n$.
Furthermore, for a $K3$ surface $X$, the Brauer group is canonically isomorphic to $T(X)^\vee\otimes (\Q/\Z)$ (\cite{vGe} pp.~225).
Since symplectic automorphisms act trivially on $T(X)$, they act trivially on $\mathrm{Br}(X)$.
By \eqref{Brauer}, the statement holds.
\end{proof}

Note that the torsion part of $J(T(X)^\vee)$ in \eqref{transreg} is $T(X)^\vee \otimes (\Q/\Z) \simeq \mathrm{Br}(X)$. 
Consider the following diagram.
\begin{equation}\label{roitman}
\begin{tikzcd}
\CH^2(X,1)_{\ind} \arrow[rrr,"r"] &&&  J(T(X)^\vee)  \\
(\CH^2(X,1)_{\ind})_{\mathrm{tor}} \arrow[u,hook] & \mathrm{Br}(X)(1) \arrow[l,"\eqref{Brauer}"',"\sim"]  \arrow[r,"\sim"] &  \mathrm{Br}(X) \arrow[r,"\sim"]& T(X)^\vee \otimes (\Q/\Z) \arrow[u,hook]
\end{tikzcd}
\end{equation}
The commutativity of \eqref{roitman} implies that the transcendental regulator induces an isomorphism between torsion parts.
However, the author does not know the commutativity of \eqref{roitman} follows from Kahn's construction of the isomorphism \eqref{Brauer}.
Nevertheless, if we assume such Roitman-type result, we can say the following.
\begin{prop}\label{rot}
If the map \eqref{transreg} induces an isomorphism between torsion parts, then Conjecture \ref{main} implies that $\Aut_s(X)$ acts trivially on $\CH^2(X,1)_\ind$.
\end{prop}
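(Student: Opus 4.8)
The plan is to deduce the integral statement from the rational one by using the transcendental regulator to rule out a possibly nonzero torsion discrepancy. First I would unwind what Conjecture \ref{main} gives at the integral level: it asserts that every $g\in\Aut_s(X)$ acts trivially on $\CH^2(X,1)_\ind\otimes\Q$, so for each $\xi\in\CH^2(X,1)_\ind$ the element $g^*\xi-\xi$ maps to $0$ in $\CH^2(X,1)_\ind\otimes\Q$. Since the kernel of $\CH^2(X,1)_\ind\to\CH^2(X,1)_\ind\otimes\Q$ is exactly the torsion subgroup, this shows $g^*\xi-\xi\in(\CH^2(X,1)_\ind)_\mathrm{tor}$. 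Thus, granting Conjecture \ref{main}, the entire obstruction to integral triviality is a torsion class.

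Next I would bring in the functoriality of the transcendental regulator $r$ of \eqref{transreg} together with the symplectic hypothesis. The regulator into Deligne cohomology is compatible with pullback by automorphisms, and this compatibility descends along the exact sequence \eqref{morHS} to the transcendental quotient, giving $r(g^*\xi)=g^*(r(\xi))$, where on the right $g^*$ denotes the induced action on $J(T(X)^\vee)$. Because $g$ is symplectic it acts trivially on $T(X)$, hence on $T(X)^\vee$, on $T(X)^\vee_\C$, and therefore on the whole quotient $J(T(X)^\vee)$. Consequently $r(g^*\xi-\xi)=g^*(r(\xi))-r(\xi)=0$, so $g^*\xi-\xi$ lies in $\Ker(r)$.

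Finally I would combine the two conclusions: $g^*\xi-\xi$ is simultaneously a torsion class and an element of $\Ker(r)$. By the standing hypothesis that $r$ induces an \emph{isomorphism}—in particular an injection—on torsion parts, any torsion element killed by $r$ must vanish, so $g^*\xi=\xi$ for all $\xi$, which is precisely the claim. The argument itself is then immediate; the only point requiring genuine care is the functoriality identity $r\circ g^*=g^*\circ r$, i.e.\ the compatibility of the transcendental regulator with the $\Aut_s(X)$-action through \eqref{morHS}. I expect that verification, rather than any deeper input, to be the main technical obstacle, and it is a formal consequence of the functoriality of the Beilinson--Deligne regulator once one checks that symplectic automorphisms respect the splitting $\mathrm{NS}(X)\oplus T(X)^\vee$ underlying \eqref{morHS}.
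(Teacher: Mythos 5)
Your proposal is correct and follows essentially the same route as the paper's own proof: Conjecture \ref{main} forces $g^*\xi-\xi$ to be torsion, triviality of the symplectic action on $T(X)$ (hence on $J(T(X)^\vee)$) together with equivariance of $r$ places it in $\Ker(r)$, and injectivity of $r$ on torsion parts kills it. The paper's argument is just a terser version of yours, leaving the functoriality of the regulator implicit.
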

\begin{proof}
Let $\xi \in \CH^2(X,1)_\ind$ and $\sigma\in\Aut_s(X)$.
If Conjecture \ref{main} is true, $\sigma^*(\xi) - \xi$ is torsion. 
However, since $\sigma$ acts trivially on the target of \eqref{transreg}, the element $\sigma^*(\xi)-\xi$ lies in the kernel of \eqref{transreg}.
By the assumption, we conclude $\sigma^*(\xi)-\xi=0$.
\end{proof}

\bibliographystyle{plain}
\bibliography{reference}

\end{document}